\newcommand*{\affaddr}[1]{#1}
\newcommand*{\affmark}[1][*]{\textsuperscript{#1}}
\newtheorem{theorem}{Theorem}%[section]
\theoremstyle{theorem}
\newtheorem{lemma}[theorem]{Lemma}
\numberwithin{equation}{section}
\begin{document}
\title[On the summability of Random Fourier--Jacobi Series]{On the summability of Random Fourier--Jacobi Series}
\author[P. Maharana, S. Sahoo]
{P\lowercase{artiswari} M\lowercase{aharana}\affmark[1] \lowercase{and}
S\lowercase{abita} S\lowercase{ahoo}\affmark[2]\\
\affaddr{\affmark[1] D\lowercase{epartment} \lowercase{of} M\lowercase{athematics}, S\lowercase{ambalpur} U\lowercase{niversity}, O\lowercase{disha}, I\lowercase{ndia}}\\
\affaddr{\affmark[2] D\lowercase{epartment} \lowercase{of} M\lowercase{athematics}, S\lowercase{ambalpur} U\lowercase{niversity}, O\lowercase{disha}, I\lowercase{ndia}.
}\\
{\affmark[1] \lowercase{partiswarimath1@suniv.ac.in} }\\
{\affmark[2] \lowercase{sabitamath@suniv.ac.in} } \\
 }
%\date{}

\begin{abstract}
This article is a study on the summability of random Fourier--Jacobi series of some functions in different spaces.
We consider the random series
$
\sum_{n=0}^\infty a_nA_n(\omega)p_n^{(\gamma,\delta)}(y),
$
where $p_n^{(\gamma,\delta)}(y),\gamma,\delta>-1$ are orthonormal Jacobi polynomials, the scalars $a_n$
are Fourier--Jacobi coefficients of a function $f$ and
the random variables $A_n(\omega)$
are Fourier--Jacobi coefficients of the symmetric stable process $X(t,\omega)$ of index $\alpha \in [1,2].$
It is established that the random Fourier--Jacobi series is $\Theta$--summable in probability, if $a_n$ are
the Fourier--Jacobi coefficients of function $f$ in the space $C_{[-1,1]}^{(\eta,\tau)}.$
The Ces{\'a}ro $(C,\phi),\phi \geq1$ summability of random Fourier--Jacobi series is shown, for the symmetric stable process $X(t,\omega)$ of index $\alpha \in [1,2]$ under different conditions on the parameters
 $\gamma,\delta,\eta$ and $\tau.$
The other cases of summability, such as Riesz, Rogosinski, etc., are also discussed.
Further, the N{\"o}rlund summability, generalized N{\"o}rlund summability, and lower triangular summability of random Fourier--Jacobi series are proved if $a_n$ are the Fourier--Jacobi coefficients of a function $f \in L_{[-1,1]}^{1,(\gamma,\delta)},$ and $A_n(\omega)$ are associated with the symmetric stable process $X(t,\omega)$ of index one. It is observed that the conditions on the parameters $\gamma,\delta$ differ from that of the conditions on $\gamma,\delta$
for the Fourier--Jacobi series of functions $f$ in $L_{[-1,1]}^{1,(\gamma,\delta)}.$

{\bf 2020 MSC Classification}-: 60G99, 40G15.

{\bf Key words}: Ces{\'a}ro summability; Convergence in probability; Fourier-Jacobi series; Jacobi polynomials; Lower triangular summability; N{\"o}rlund summability; Random variables; Stochastic integral; Symmetric stable process.
  
\end{abstract}
\maketitle{}
\section{Introduction}
The foremost aspect in the study of Fourier analysis is the convergence of Fourier series of functions.
In the general case, the convergence of the Fourier series of all functions is not always possible.
However, using some summability methods, convergence is possible in some sense, like in Ces{\'a}ro, Riesz, N{\"o}rlund, etc.
The summability of Fourier series in orthogonal polynomials is investigated by many researchers. Also, the weighted convergence of certain sums in the
Fourier--Jacobi series was investigated by Lubinsky, Totik \cite{LT}, and many others.

Further, Nayak, Pattanayak, Mishra \cite{NPM}, Towghi \cite{T1}, and others studied the summability of random Fourier series.
Motivated by the works of Liu and Liu \cite{LL,LL1}, we investigated the convergence of random Fourier--Jacobi series of functions $f$ in various continuous spaces \cite{PS} and the space $L_{[-1,1]}^p$  \cite{PS1}.

In this article, we have studied the summability of random Fourier series
\begin{equation}\label{1.11}
\sum_{n=0}^\infty a_n A_n(\omega) p_n^{(\gamma,\delta)}(y)
\end{equation}
in orthonormal Jacobi polynomial $p_n^{(\gamma,\delta)}(y),$ where $a_n$ are scalars, $A_n(\omega)$ are random variables.
The scalars $a_n$ are the Fourier--Jacobi coefficients of a function $f$ defined as
\begin{equation}\label{1.2}
a_n:=\int_{-1}^1 f(t) p_n^{(\gamma,\delta)}(t)\rho^{(\gamma,\delta)}(t)dt,\; \gamma,\delta>-1.
\end{equation}
The functions $f$ chosen here are from the continuous space $C_{[-1,1]}^{(\eta,\tau)}$ and the
space $L_{[-1,1]}^{p,(\gamma,\delta)}.$
In relation to the space $C_{[-1,1]}^{(\eta,\tau)},$ the $\Theta$--summability, Ces{\'a}ro $(C,\phi)$ summability, and some other summability of random series (\ref{1.11}) is established. The N{\"o}rlund summability, generalized N{\"o}rlund summability, and lower triangular summability of random series (\ref{1.11}) are established in connection to the space $L_{[-1,1]}^{1,(\gamma,\delta)}.$

The class of continuous functions
\begin{equation*}
C_{[-1,1]}^{(\eta,\tau)}:= \Big\{f \in C(-1,1)|\lim_{|y| \rightarrow 1} (f \rho^{(\eta,\tau)})(y)=0 \Big\}
\end{equation*}
 is a linear space of real--valued continuous functions defined on the interval $(-1,1)$ equipped with the norm
\begin{equation*}
||f||_{\rho^{(\eta, \tau)}}:=||f\rho^{(\eta, \tau)}||:=\max_{y \in [1,-1]}\Big\{\big|(f\rho^{(\eta, \tau)})(y)\big|\Big\},
\end{equation*}
where
\begin{equation*}
\rho^{(\eta,\tau)}(y):=(1-y)^\eta (1+y)^\tau, (y \in [-1,1])
\end{equation*}
 is the Jacobi weight with parameters $\eta,\tau \geq 0.$

The space $L_{[-1,1]}^{p,(\gamma,\delta)}$ with
the weight function $\rho^{(\gamma,\delta)}(y):=(1-y)^\gamma(1+y)^\delta ,\;\;\gamma,\delta>-1,$ is the class of all measurable functions $f$
on the segment $[-1,1],$ such that
\begin{equation*}
\int_{-1}^1 |f(y)\rho^{(\gamma,\delta)}(y)|^p dy < \infty.
\end{equation*}
This space is equipped with the norm
\begin{equation*}
||f||_{L_{[-1,1]}^{p,(\gamma,\delta)}}=\Big\{\int_{-1}^1 |f(y) \rho^{(\gamma,\delta)}(y)|^p dy\Big\}^{\frac{1}{p}},\; for \; p \geq 1.
\end{equation*}

In random series (\ref{1.11}), the random variables $A_n(\omega)$ are the Fourier--Jacobi coefficients of the symmetric stable process $X(t,\omega)$ of index $\alpha \in [1,2].$
We know that, the stochastic integral
\begin{equation}\label{0.1.1}
\int\limits_a^b f(t)dX(t,\omega)
\end{equation}
is defined in the sense of probability and is a random variable, if $f$ is a continuous function in $[a,b]$ and $X(t,\omega),$ for $t\in \mathbb{R},$ is a continuous stochastic process with independent increments \cite{L}.
Moreover, the stochastic integral
(\ref{0.1.1}) is defined in the sense of probability, for $f \in L_{[a,b]}^p, \; p \geq \alpha \geq 1$ and $X(t,\omega)$ is a symmetric stable process of index $\alpha \in[1,2]$ \cite{NPM}.
Accordingly, if $f\in L_{[-1,1]}^{p,(\eta,\tau)},\;p \geq 1,\eta,\tau >-1$ i.e. $f\rho^{(\eta,\tau)} \in L_{[-1,1]}^p,$ then the stochastic integral
\begin{equation*}
\int\limits_{-1}^1 f(t)\rho^{(\eta,\tau)}(t)dX(t,\omega)
\end{equation*}
 exists in probability, for $p \geq \alpha \geq 1 .$
A sequence of random variables $X_n$ is said to converge in probability to a random variable $X,$ if
$\lim\limits_{n\rightarrow \infty} P\Big(|X_n - X|>\epsilon\Big) = 0,$ for $\epsilon>0.$
In particular, if $f(t)$ is the orthonormal Jacobi polynomial $p_n^{(\gamma,\delta)}(t),\;\gamma,\delta>-1,$ then $p_n^{(\gamma,\delta)}(t)\rho^{(\eta,\tau)}(t)$ is continuous in $[-1,1],$ for all $\eta,\tau \geq 0$ and is in both the spaces $C_{[-1,1]}^{(\eta,\tau)}$
and $L_{[-1,1]}^{p,(\eta,\tau)}.$ Hence the integrals
\begin{equation}\label{1.13.0}
A_n(\omega):=\int_{-1}^1 p_n^{(\gamma,\delta)}(t)\rho^{(\eta,\tau)}(t)dX(t,\omega)
\end{equation}
exist and are random variables, which are not independent, for each $n=1,2,\dots$ .
The series (\ref{1.11}) is called the random Fourier--Jacobi series of function $f.$
The summability of random Fourier--Jacobi series (\ref{1.11}) of functions $f$ in the space $C_{[-1,1]}^{(\eta,\tau)}$ and
$ L_{[-1,1]}^{p,(\gamma,\delta)},\;p \geq 1$ are studied in this article.

The summability in the space $C_{[-1,1]}^{(\eta,\tau)}$ is discussed by using the summation matrix $\Theta$ which
 is a lower triangular infinite matrix (see equation (\ref{1.1.1})). We find different summability methods for different values of the entries of the matrix $\Theta.$ The various summability of the random series (\ref{1.11}) depend on different conditions on
 the summation matrix $\Theta.$
The N{\"o}rlund summability and some other variants of it are established,
for the random Fourier--Jacobi series of functions in the space $L_{[-1,1]}^{p,(\gamma,\delta)},\;p=1.$ Each of these summability methods has conditions on the parameters $\gamma,\delta$ that differ from the conditions on these parameters in the case of the Fourier--Jacobi series of functions $f \in L_{[-1,1]}^{1,(\gamma,\delta)}.$ 
If $0 \leq \gamma <1/2$ and $\delta \geq 0,$ the random Fourier--Jacobi series is N{\"o}rlund summable. In this case, however the Fourier--Jacobi series is summable, for $-1/2 \leq \gamma <1/2$ and $\delta >-1/2.$
%The random Fourier--Jacobi series is N{\"o}rlund summable, if $0 \leq \gamma <1/2$ and $\delta \geq 0.$ However the Fourier--Jacobi series is summable in this case, for $-1/2 \leq \gamma <1/2$ and $\delta >-1/2.$
The generalized N{\"o}rlund summability of random Fourier--Jacobi series holds for $\gamma \geq 0,$ $\delta-\gamma >1$ and $\delta +\gamma >0,$ but it is observed for the Fourier--Jacobi series if $\gamma \geq 0, \delta -\gamma >1$ and $\delta +\gamma \geq -1.$
Furthermore, the random Fourier--Jacobi series is lower triangular summable for $0 \geq \gamma \geq 1/2$ and $\delta \geq 0.$ The Fourier--Jacobi series, on the other hand, is lower triangular summable if $-1/2 \geq \gamma \geq 1/2$ and $\delta >-1/2.$

This article is organized as follows.
The preliminaries and some results relevant to this work are stated in Section 2.
The $\Theta$--summability in general of the random Fourier--Jacobi series (\ref{1.11}) associated with symmetric stable process of index $\alpha \in [1,2]$ under some conditions on $\gamma,\delta,\eta$ and $\tau$ is discussed in Section 3.
Further, the other summability of the random Fourier--Jacobi series (\ref{1.11}), such as Ces{\'a}ro, Riesz, Rogosinski, etc., for different set of values of entries in the matrix $\Theta$ are established.
 Section 4 investigates the summability of random Fourier--Jacobi series (\ref{1.11}) of functions $f \in L_{[-1,1]}^{1,(\gamma,\delta)}.$
We prove the N{\"o}rlund summability, generalized N{\"o}rlund summability, and lower triangular matrix summability of the random series (\ref{1.11}) associated with the symmetric stable process of index $\alpha=1,$ for the parameter $\gamma,\delta \geq 0$ at a particular point only.
%The conditions on the parameters $\gamma,\delta$ in each of these summability methods differ from that of the conditions on these parameters in case of Fourier--Jacobi series of the functions $f \in L_{[-1,1]}^{1,(\gamma,\delta)}.$
%Further, for $0 \geq \gamma \geq 1/2$ and $\delta \geq 0,$ the random Fourier--Jacobi series is lower triangular summable. However, the Fourier--Jacobi series is lower triangular summable, if
\section{Preliminaries and results on summability of Fourier--Jacobi series}
 Some definitions and results from the work of Chripk{\'o}, Raghuvanshi, Thorpe, and Dhakal \cite{C,dha,rag,to} are presented here which will be of use in proving our results in next sections.
%\subsection{Preliminaries on $C_{[-1,1]}^{(\eta,\tau)}$ space}

Chripk{\'o} \cite{C} discussed the $\Theta$--summability of Fourier--Jacobi series
\begin{equation}\label{1.1}
\sum_{n=0}^{\infty} a_n p_n^{(\gamma,\delta)}(y)
\end{equation}
in the weighted space $C_{[-1,1]}^{(\eta, \tau)}$ of
continuous functions.
He considered the summation matrix $\Theta$ as
\begin{equation}\label{1.1.1}
\Theta :=
\begin{bmatrix}
\theta_{0,1} & & &   \\
\theta_{0,2} & \theta_{1,2} & & \\
\theta_{0,3} & \theta_{1,3} & \theta_{2,3}\\
\vdots & \vdots &  \vdots\\
 \vdots &  \vdots &  \vdots \ddots\\
\end{bmatrix},
\end{equation}
where $\theta_{k,n}$'s are the real entries.
The $\Theta$--sum of a Fourier--Jacobi series (\ref{1.1}) is defined as
\begin{equation}\label{1.8}
\mathbf{s}_n^{\Theta,(\gamma,\delta)} (f,y):=\sum_{k=0}^{n-1} \theta_{k,n} a_k p_k^{(\gamma, \delta)}(y),\; (y \in [-1,1],\;n \in \mathbb{N}),
\end{equation}
where $a_k$ are the Fourier--Jacobi coefficients of a function $f \in C_{[-1,1]}^{(\eta,\tau)}.$\\
Chripk{\'o} considered some choices of the summation matrix $\Theta,$ which are stated below.\\
\textbf{Case-1}\\
If $\theta_{k,n}:=1\;(k=0,1,2\dots,n-1,n \in \mathbb{N}),$ then the $n$th Fourier--Jacobi series (\ref{1.8}) of $f$ is equal to the $n$th partial sum of the Fourier--Jacobi series (\ref{1.1}).\\
\textbf{Case-2}\\
Let $$\theta_{k,n}:= \frac{B_{n-k-1}^{(\phi)}}{B_{n-1}^{(\phi)}}\;(\phi \geq 0,\;k=0,\dots,n-1,\; n\in \mathbb{N}),$$
where $B_0^{(\phi)}:=1,\;B_m^{(\phi)}:=\begin{pmatrix}
m+\phi\\
m
\end{pmatrix}=\displaystyle{\frac{(\phi+1)\dots(\phi+m)}{m!}},\;(m \in \mathbb{N}).$
Then $\Theta:= (\theta_{k,n})$ is the Ces{\'a}ro summation matrix and the Ces{\'a}ro means of the Fourier--Jacobi series (\ref{1.1}) is defined as
\begin{equation}\label{1.9}
\sigma_n^{(\phi),\gamma,\delta}(f,y):=\sum_{k=0}^{n-1} \frac{B_{n-k-1}^{(\phi)}}{B_{n-1}^{(\phi)}} a_k p_k^{(\gamma,\delta)}(y),\; y \in [-1,1],\;n \in \mathbb{N}
\end{equation}
\begin{equation*}
= \int_{-1}^1 \mathbf{F}_n^{(\phi),\gamma,\delta}(y,t)\rho^{(\gamma,\delta)}(t)dt,
\end{equation*}
where
\begin{equation*}
 \mathbf{F}_n^{(\phi),\gamma,\delta}(y,t):=\sum_{k=0}^{n-1}\frac{B_{n-k-1}^{(\phi)}}{B_{n-1}^{(\phi)}}p_k^{(\gamma,\delta)}(t)p_k^{(\gamma,\delta)}(y).
\end{equation*}
If $\phi=0,$ then it is the same as in Case-1. For $\phi \geq 1,$ the sum (\ref{1.9}) is known as  Ces{\`a}ro $(C,\phi)$ means of the Fourier--Jacobi series (\ref{1.1}). In particular, for $\phi=1,$ we obtain the Fejer summation of the Fourier--Jacobi series (\ref{1.1}). \\
\textbf{Case-3}\\
Let $\theta_{k,n}:= \Big[1-\Big(\displaystyle{\frac{k}{n}}\Big)^{\nu}\Big]^\mu,\; k=0,1,\dots,n-1,n \in \mathbb{N},$
where $\nu,\; \mu \geq 0$ are constant real numbers.
Then
\begin{equation*}
\mathbf{s}_n^{(\gamma,\delta)_{\nu,\mu}}(f,y):=\sum_{k=0}^{n-1}\Big[1-\Big({\frac{k}{n}}\Big)^{\nu}\Big]^\mu a_k p_k^{(\gamma,\delta)}(y), \;(y \in [-1,1])
\end{equation*}
is the Riesz $(R,\nu,\mu)$ sum of the Fourier--Jacobi series (\ref{1.1}).\\
\textbf{Case-4}\\
Let \begin{center}
\(\theta_{k,n}:= \begin{pmatrix}
1,\;\; if \; 0\leq \displaystyle{\frac{k}{n}}\leq s \\
\displaystyle{\frac{\frac{k}{n}-1}{s-1}}, \;\; if \; s < \frac{k}{n} \leq 1\\
\end{pmatrix}
\),
\end{center}
where $k \in 0,1,2,\dots,n-1,\; n \in \mathbb{N}$ and $s \in (0,1).$
Then
 \begin{equation*}
\mathbf{s}_n^{(\gamma,\delta),s}(f,y):=\sum_{k=0}^{n-1} \theta_{k,n} a_k p_k^{(\gamma,\delta)}(y)
\end{equation*}
is the de la Vall{\'e}e Poussin means  of the Fourier--Jacobi series (\ref{1.1}).\\
\textbf{Case-5}\\
Let $\theta_{k,n}:=cos\displaystyle{\frac{\pi k}{2n}},$ for $k=0,1,\dots ,n-1, \; n \in \mathbb{N},$ then
\begin{equation*}
\mathbf{s}_n^{(\gamma,\delta),R}(f,y):=\sum_{k=0}^{n-1} \theta_{k,n} a_k p_k^{(\gamma,\delta)}(y),\; y \in[-1,1]
\end{equation*}
is known as the Rogosinski means of the Fourier--Jacobi series (\ref{1.1}).
%\subsection{Preliminaries on $L_{[-1,1]}^{1,(\gamma,\delta)}$ space}

Choudhary \cite{RSC} discussed the regular N{\"o}rlund summability (also known as N{\"o}rlund summability) of Fourier--Jacobi series (\ref{1.1}).
The regular N{\"o}rlund sum of a series is defined below:\\
Let $\sum\limits_{n=0}^\infty a_n$ be a given infinite series with the sequence of partial sums $\{s_n\}.$ Let $p_n$ be a sequence of constants,
real or complex and let $\mathrm{P}_n=p_0+p_1+\dots+p_n.$
Define the sequence to sequence transformation $h_n$ as
\begin{equation*}
  h_n:=\frac{1}{\mathrm{P}_n} \sum_{v=0}^n p_{n-v}s_v=\frac{1}{\mathrm{P}_n} \sum_{v=0}^n p_{v}s_{n-v},\;\mathrm{P}_n \neq 0
\end{equation*}
The sequence $\{h_n\}$ is said to be N{\"o}rlund means of $\{s_n\},$ generated by the constants $\{p_n\}.$ The series
$\sum\limits_{n=0}^\infty a_n$ is N{\"o}rlund summable to the sum $s$ if $\lim\limits_{n \rightarrow \infty} h_n$ exists and equal to $s.$
\\
The N{\"o}rlund sum of Fourier--Jacobi series at the point $y=1$ is
\begin{equation*}
\mathbf{h}_n^{(\gamma,\delta)}(f,y):= \frac{1}{\mathrm{P}_n} \sum_{k=0}^n p_{n-k}\mathbf{s}_k^{(\gamma,\delta)}(f,1),
\end{equation*}
where $\mathbf{s}_k^{(\gamma,\delta)}(f,1)$ is the $n$th partial sum of the Fourier--Jacobi series (\ref{1.1}) at the point $y=1.$
We use the symbol $(\mathbf{N},p_n)$ for it.
The following theorem is established by Choudhary.
\begin{theorem}\cite{RSC}\label{t.1}
Let $(\mathbf{N},p_n)$ be a N{\"o}rlund method defined by a real non--negative monotonic non--increasing sequence of coefficients $\{p_n\}$ such that
\begin{equation}\label{5.13.1}
\mathrm{P}_n \rightarrow \infty\; as \ n \rightarrow \infty.
\end{equation}
 If
\begin{equation}\label{5.12}
\Psi(t)=O \Big[\displaystyle{\frac{p(1/t)}{\mathrm{P}(1/t)}t^{(2\gamma+1)}} \Big],\; as  \; t \rightarrow 0,
\end{equation}
and
\begin{equation}\label{5.12.1}
\sum\limits_{n=0}^\infty \displaystyle{\frac{n^{\gamma+1/2}}{\mathrm{P}_n}}< \infty,
\end{equation}
 then
 the series (\ref{1.1}) of some function $f \in L_{[-1,1]}^{1,(\gamma,\delta)}$
 is $(\mathbf{N},p_n)$ summable at the point $y=1$ to the sum $A$ provided
 $-1/2 \leq \gamma < 1/2,\; \delta>-1/2$ and
 the antipole condition
\begin{equation}\label{5.13}
\int_{-1}^b {(1+x)}^{\delta/2-3/4}|f(x)|dx <\infty,
\end{equation}
is satisfied, for fixed $b.$
\end{theorem}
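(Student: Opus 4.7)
The plan is to reduce the statement to the convergence to zero of an integral against a N\"orlund-averaged Christoffel--Darboux kernel at $y=1$, and to break that integral into three regions in which the three hypotheses \eqref{5.12}, \eqref{5.12.1} and \eqref{5.13} are each used in turn.

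First I would write the $k$th partial sum of (\ref{1.1}) at $y=1$ as
\[
\mathbf{s}_k^{(\gamma,\delta)}(f,1) = \int_{-1}^{1} f(x)\,K_k^{(\gamma,\delta)}(1,x)\,\rho^{(\gamma,\delta)}(x)\,dx,
\]
where $K_k^{(\gamma,\delta)}(1,x):=\sum_{j=0}^{k} p_j^{(\gamma,\delta)}(1)\,p_j^{(\gamma,\delta)}(x)$ is the reproducing kernel. Averaging with the N\"orlund weights $p_{n-k}/\mathrm{P}_n$ yields
\[
\mathbf{h}_n^{(\gamma,\delta)}(f,1) = \int_{-1}^{1} f(x)\,\Phi_n(x)\,\rho^{(\gamma,\delta)}(x)\,dx,\qquad \Phi_n(x):= \frac{1}{\mathrm{P}_n}\sum_{k=0}^{n} p_{n-k}\,K_k^{(\gamma,\delta)}(1,x).
\]
By orthonormality of $\{p_j^{(\gamma,\delta)}\}$ one has $\int_{-1}^{1}\Phi_n(x)\rho^{(\gamma,\delta)}(x)\,dx = 1$, so the problem reduces to showing
\[
\mathbf{h}_n^{(\gamma,\delta)}(f,1) - A = \int_{-1}^{1} (f(x)-A)\,\Phi_n(x)\,\rho^{(\gamma,\delta)}(x)\,dx \longrightarrow 0.
\]

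After substituting $x=\cos\theta$, I would split the integral into the pole region $\theta\in(0,\pi/n]$, the intermediate region $\theta\in(\pi/n,\,\pi-\eta_0)$ for a fixed small $\eta_0>0$, and the antipole region $\theta\in[\pi-\eta_0,\pi)$. In the antipole region, the classical decay bound $|p_k^{(\gamma,\delta)}(\cos\theta)|=O(k^{-1/2}(1+\cos\theta)^{-\delta/2-1/4})$ combined with $p_k^{(\gamma,\delta)}(1)=O(k^{\gamma+1/2})$ gives $|K_k^{(\gamma,\delta)}(1,x)|=O(k^{\gamma+1/2})(1+x)^{-\delta/2-1/4}$; the $\theta$-integral is then finite by the antipole condition \eqref{5.13}, and its N\"orlund average tends to zero by \eqref{5.12.1} together with \eqref{5.13.1}. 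In the intermediate region, the uniform off-endpoint bound $|p_k^{(\gamma,\delta)}(\cos\theta)|=O(k^{-1/2})$ reduces the contribution to the same shape, which again vanishes by \eqref{5.12.1}.

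The main obstacle is the pole region. Here I would use Hilb-type asymptotics for $p_k^{(\gamma,\delta)}(\cos\theta)$ near $\theta=0$ and the crude bound $K_k^{(\gamma,\delta)}(1,\cos\theta)=O(k^{2\gamma+2})$ for $\theta\le \pi/k$. After interchanging sum and integral I would perform an Abel transformation against the weights $p_{n-k}$; monotonicity and nonnegativity of $\{p_n\}$ keep the transformed coefficients of one sign, while an integration by parts introduces the modulus $\Psi$. Condition \eqref{5.12} is calibrated precisely so that the leading singular term of shape $\Psi(1/n)\,n^{2\gamma+1}\,\mathrm{P}_n/p_n$ is controlled, after which an $\varepsilon$-argument (subdividing $(0,\pi/n]$ and using $p_n/\mathrm{P}_n\to 0$ from \eqref{5.13.1}) closes the estimate. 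The restriction $-1/2\le\gamma<1/2$, $\delta>-1/2$ is used both for the validity of the Hilb asymptotics and for the integrability of $\rho^{(\gamma,\delta)}$ against the pole kernel; the delicate step, on which everything else depends, is the matching of the three scales $\Psi(t)$, $p(1/t)/\mathrm{P}(1/t)$ and $t^{2\gamma+1}$ inside the Abel transformation.
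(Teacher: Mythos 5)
The paper does not prove this statement: Theorem \ref{t.1} is reproduced verbatim from Choudhary \cite{RSC} as a preliminary in Section 2, and no argument for it appears anywhere in the text, so there is no in-paper proof to compare your attempt against. Judged on its own, your outline is the standard route for such results (integral representation via the Christoffel--Darboux kernel, N\"orlund averaging, normalization $\int_{-1}^{1}\Phi_n\rho^{(\gamma,\delta)}=1$, and a three-region split at the pole, interior, and antipole), and the way you pair each hypothesis with a region --- \eqref{5.12} with the pole via an Abel transformation, \eqref{5.12.1} with the interior and antipole growth of the kernel, \eqref{5.13} with integrability near $x=-1$ --- is the correct calibration; note in particular that the exponent $\delta/2-3/4$ in \eqref{5.13} matches your kernel bound $(1+x)^{-\delta/2-1/4}$ after multiplying by the weight $(1+x)^{\delta}$.

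Two points in the sketch are looser than they should be. First, the antipole bound $|K_k^{(\gamma,\delta)}(1,x)|=O(k^{\gamma+1/2})(1+x)^{-\delta/2-1/4}$ does not follow from summing $p_j^{(\gamma,\delta)}(1)=O(j^{\gamma+1/2})$ against termwise decay of $p_j^{(\gamma,\delta)}(x)$ (that gives at least an extra power of $k$); you need the Christoffel--Darboux formula, which replaces the sum by two boundary terms divided by $1-x$, a harmless factor near $x=-1$. Second, the claim that the intermediate and antipole contributions ``vanish by \eqref{5.12.1}'' needs more care: the crude bound $\frac{1}{\mathrm{P}_n}\sum_{k=0}^{n}p_{n-k}\,k^{\gamma+1/2}\le n^{\gamma+1/2}$ does not tend to zero, so one must exploit either a summation by parts against the monotone weights $p_{n-k}$ or the fact that \eqref{5.12.1} forces $\mathrm{P}_n$ to grow faster than $n^{\gamma+3/2}$, before the conclusion $o(1)$ is legitimate. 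Since the statement never defines $\Psi$, your reading of it as the integrated local deviation of $f$ from $A$ near the pole (entering through integration by parts) is the right one, but a complete proof would have to fix that definition explicitly.
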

%\begin{equation}\label{1.10}
%\sum_{k=a}^n \frac{P_k}{k^{(\gamma+3/2)}\log k}=O \Big(\frac{P_n}{n^{(\gamma+1/2)}}\Big),
%\end{equation}
%where $a$ is fixed positive integer and
%\begin{equation}\label{1.11.1}
%\sum_{n=0}^\infty \frac{n^{(\gamma+1/2)}}{P_n} <\infty.
%\end{equation}
%If
%\begin{equation}\label{1.13}
%\psi(t)=\displaystyle{\int_0^t |F(\varphi)|d\varphi}=\displaystyle{o\Big(\frac{t^{(2\gamma+2)}}{\log{1/t}}\Big)},
%\end{equation}
%where
%$F(\varphi):= [f(cos\varphi)-A]{(sin\frac{\varphi}{2})}^{2\gamma+1} {(cos\frac{\varphi}{2})}^{2\delta+1},$ and
%the antipole condition
%\begin{equation}\label{1.12}
%\int_{-1}^b {(1+x)}^{(\delta/2-3/4)}|f(x)|dx < \infty,
%\end{equation}
% is satisfied, for fixed $b,$
%then the series is $(N,p_n)$ summable at the point $x=1$ to the sum $A$ provided $-1/2 \leq \gamma <1/2, \; \beta >-1/2.$

Raghuvanshi \cite{rag} discussed the generalized N{\"o}rlund summability of Fourier--Jacobi series (\ref{1.1}).
The generalized N{\"o}rlund summability of a series is defined as below:\\
Let $\{s_n\}$ be the sequence of partial sums of an infinite series $\sum\limits_{n=0}^\infty \mathbf{a}_n.$ Let $\{r_n\}$ and $\{q_n\}$ be any two sequences of positive real constants
with $R_n$ and $Q_n$ as their $n$th partial sums respectively,
and
let
\begin{equation*}
(q*r)_n:=\sum_{k=0}^n q_{n-k}r_k=\sum_{k=0}^n q_k r_{n-k}
\end{equation*}
 tends to infinity as $n \rightarrow \infty.$
As in \cite{bon}, define the sequence to sequence transformation as
\begin{eqnarray*}
t_n^{q,r}:=\frac{1}{(q\ast r)_n} \sum_{k=0}^n q_{n-k} r_k s_k.
\end{eqnarray*}
If
$t_n^{q,r} \rightarrow s \; as \; n\rightarrow \infty,$
then the infinite series $\sum\limits_{n=0}^\infty \mathbf{a}_n$ is said to be generalized N{\"o}rlund summable to $s.$ We use the notation $|\mathbf{N},q_n,r_n|$ for generalized N{\"o}rlund summation.
The $|\mathbf{N},q_n,r_n|$ sum of the Fourier--Jacobi series (\ref{1.1}) at the point $y=1$ is defined as
\begin{equation*}
\mathbf{u}_n^{(\gamma,\delta)}(f,1):=\frac{1}{(q\ast r)_n} \sum_{k=0}^n q_{k} r_{n-k} \mathbf{s}_k^{(\gamma,\delta)}(f,1) ,
\end{equation*}
where $\mathbf{s}_k^{(\gamma,\delta)}(f,1)$ is the $n$th partial sum of the Fourier--Jacobi series (\ref{1.1}) at the point $y=1.$
The following result is established by  Raghuvanshi.
\begin{theorem}\cite{rag}\label{5t.1}
Let $|\mathbf{N},q_n,r_n|$ be the generalized summability method defined by a non--negative real constants sequences $\{q_n\},\;\{r_n\}$
 and let $\gamma>-1/2,\; \delta-\gamma >1,\; \delta+\gamma \geq -1 $ such that
\begin{equation}\label{1.15}
\sum_{k=2}^n \frac{(q \ast r)_k}{k^{\gamma+(1/2)} \log k}= O\Big(\frac{{(q \ast r)_n}}{n^{\gamma+(1/2)}}  \Big)\; as \; n \rightarrow \infty.
\end{equation}
Also suppose that
\begin{equation}\label{1.16}
\int_{1-t}^t|f(u)-A|du=O \Big(\frac{t}{\log(1/t)} \Big) \; as \; t \rightarrow 0,
\end{equation}
and the antipole condition
\begin{equation}\label{1.17}
\int_{-1}^b (1+y)^{(\delta-\gamma-1)/2} |f(y)|dy< \infty, \; b \; fixed,
\end{equation}
are satisfied, then the series (\ref{1.1}) is $|\mathbf{N},q_n,r_n|$ summable to the sum $A$ at the point $y=1.$
\end{theorem}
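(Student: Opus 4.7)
The goal is to show $\sum_{n=1}^{\infty}|\mathbf{u}_n^{(\gamma,\delta)}(f,1)-\mathbf{u}_{n-1}^{(\gamma,\delta)}(f,1)|<\infty$ and to identify the limit of $\{\mathbf{u}_n^{(\gamma,\delta)}(f,1)\}$ as $A$. The starting point is the integral form of the generalized N\"orlund mean at $y=1$:
\begin{equation*}
\mathbf{u}_n^{(\gamma,\delta)}(f,1)=\int_{-1}^{1}\mathbf{N}_n(t)\,f(t)\,\rho^{(\gamma,\delta)}(t)\,dt,\qquad \mathbf{N}_n(t):=\frac{1}{(q\ast r)_n}\sum_{k=0}^{n} q_k\,r_{n-k}\,\mathbf{K}_k(1,t),
\end{equation*}
where $\mathbf{K}_k(1,t)=\sum_{j=0}^{k-1}p_j^{(\gamma,\delta)}(1)\,p_j^{(\gamma,\delta)}(t)$ is the reproducing (Christoffel--Darboux) kernel. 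Since $\mathbf{s}_k^{(\gamma,\delta)}(A,1)=A$ for every $k\ge 1$, subtracting consecutive means yields
\begin{equation*}
\mathbf{u}_n-\mathbf{u}_{n-1}=\int_{-1}^{1}\bigl[\mathbf{N}_n(t)-\mathbf{N}_{n-1}(t)\bigr]\bigl(f(t)-A\bigr)\,\rho^{(\gamma,\delta)}(t)\,dt,
\end{equation*}
so the entire problem reduces to an $L^1$ bound on the difference kernel $\Delta\mathbf{N}_n:=\mathbf{N}_n-\mathbf{N}_{n-1}$ weighted by $|f-A|\rho^{(\gamma,\delta)}$, followed by summation in $n$.

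The next step is to split $[-1,1]$ into $J_n^{+}=[1-1/n,1]$, $J_n^{0}=[-1+1/n,1-1/n]$, $J_n^{-}=[-1,-1+1/n]$, and estimate the three contributions separately. On $J_n^{+}$, I integrate by parts in $t$ and use (\ref{1.16}) to absorb $\int|f(t)-A|\,dt$ near $t=1$ as an $O(t/\log(1/t))$ factor; combined with the coarse estimate $|\mathbf{K}_k(1,t)|\lesssim k^{2\gamma+2}$ valid for $t$ near $1$, the $J_n^{+}$-contribution to $|\mathbf{u}_n-\mathbf{u}_{n-1}|$ is majorised by $(q\ast r)_n^{-1}\sum_{k\le n}(q\ast r)_k\,k^{-\gamma-1/2}/\log k$, which is summable in $n$ precisely by hypothesis (\ref{1.15}). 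On the antipole $J_n^{-}$, standard endpoint asymptotics for orthonormal Jacobi polynomials combined with $p_k^{(\gamma,\delta)}(1)\sim k^{\gamma+1/2}$ give
\begin{equation*}
|\Delta\mathbf{N}_n(t)|\,\rho^{(\gamma,\delta)}(t)\lesssim \frac{(1+t)^{(\delta-\gamma-1)/2}}{(q\ast r)_n},
\end{equation*}
so the $J_n^{-}$-contribution is summable in $n$ thanks to the antipole condition (\ref{1.17}) and the hypotheses $\delta-\gamma>1$, $\delta+\gamma\ge-1$. On the bulk $J_n^{0}$, Szeg\H{o}'s oscillatory asymptotics cast $\Delta\mathbf{N}_n$ as a cancelling trigonometric sum; an Abel transformation in $k$ on the weights $q_k r_{n-k}/(q\ast r)_n$ trades a factor $k^{-1}$, after which (\ref{1.15}) once more delivers summability in $n$.

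The principal obstacle lies in the bulk estimate: one must extract enough cancellation from the oscillatory Jacobi kernels, using only the joint growth hypothesis (\ref{1.15}) and the mere non-negativity of $\{q_k\},\{r_k\}$, to guarantee that $\sum_n\int_{J_n^{0}}|\Delta\mathbf{N}_n(t)|\,|f(t)-A|\,\rho^{(\gamma,\delta)}(t)\,dt<\infty$. This is strictly harder than the single-sequence N\"orlund case of Theorem~\ref{t.1}, which exploited monotonicity of $\{p_n\}$; here no such monotonicity is available and (\ref{1.15}) must carry the entire burden. Once the three contributions are shown to be summable, absolute convergence of $\sum(\mathbf{u}_n-\mathbf{u}_{n-1})$ forces $\{\mathbf{u}_n\}$ to converge, and identification of its limit with $A$ is obtained by re-running the three-piece decomposition directly on $\mathbf{u}_n-A$ and passing to the limit via (\ref{1.16}) in Dini-test form, completing the proof that the series (\ref{1.1}) is $|\mathbf{N},q_n,r_n|$-summable to $A$ at $y=1$.
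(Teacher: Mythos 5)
This theorem is not proved in the paper at all: it is Raghuvanshi's result, quoted verbatim from \cite{rag} as background for the paper's own Theorem \ref{5T.3}, so there is no in-paper argument to compare yours against. Taken on its own terms, your outline follows the standard template for endpoint summability of Fourier--Jacobi series (integral form of the means via the Christoffel--Darboux kernel, reduction to $f-A$ using $\mathbf{s}_k^{(\gamma,\delta)}(A,1)=A$, and a three-piece split of $[-1,1]$ with the Dini-type condition (\ref{1.16}) controlling the neighbourhood of $y=1$ and the antipole condition (\ref{1.17}) controlling the neighbourhood of $y=-1$), and that is indeed the shape of the arguments in \cite{RSC}, \cite{rag} and \cite{dha}.

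As a proof, however, your sketch has genuine gaps. First, you set out to prove $\sum_n|\mathbf{u}_n^{(\gamma,\delta)}(f,1)-\mathbf{u}_{n-1}^{(\gamma,\delta)}(f,1)|<\infty$, i.e.\ absolute summability; the paper defines $|\mathbf{N},q_n,r_n|$ summability only as $t_n^{q,r}\to s$, so you are attempting something stronger than the statement requires, and the stronger claim is exactly where your estimates fail. Your bound for the $J_n^{+}$ contribution is $(q\ast r)_n^{-1}\sum_{k\le n}(q\ast r)_k\,k^{-\gamma-1/2}(\log k)^{-1}$, and hypothesis (\ref{1.15}) converts this only into $O(n^{-\gamma-1/2})$, which is bounded but not summable in $n$ over the full range $\gamma>-1/2$ permitted by the theorem; the assertion that it is ``summable in $n$ precisely by hypothesis (\ref{1.15})'' is therefore unjustified. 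Second, you explicitly flag the bulk estimate on $J_n^{0}$ --- extracting oscillatory cancellation using only the non-negativity of $\{q_k\},\{r_k\}$ --- as ``the principal obstacle'' and then do not resolve it, so the hardest step is named rather than carried out. A workable route is to estimate $\mathbf{u}_n^{(\gamma,\delta)}(f,1)-A$ directly and show it is $o(1)$ (ordinary summability, which is all the statement asserts under the paper's definition), supplying the explicit kernel bounds for $\mathbf{N}_n(t)$ on each piece; as written, the sketch does not close.
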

Dhakal \cite{dha} studied the summability of Jacobi series in lower triangular matrix method. The lower triangular matrix method of a series is defined as below:\\
Let $M=(\mathbf{b}_{n,k})$ be an infinite lower triangular matrix satisfying the Silverman-T{\"o}eplitz conditions \cite{to} of regularity i.e.
\begin{equation*}
\sum_{k=0}^n \mathbf{b}_{n,k} \rightarrow 1\; as \; n \rightarrow \infty,
\end{equation*}
where $\mathbf{b}_{n,k}=0,$ for $k>n$ and $\sum\limits_{k=0}^n |\mathbf{b}_{n,k}| \leq M,$ for $M$ a finite $+$ve constant.\\
Consider $s_n$ be the $n$th partial sum of an infinite series $\sum\limits_{n=0}^\infty \mathbf{a}_n.$
Let $\{t_n\}$ be the lower triangular matrix sums of $\{s_n\},$ generated by the sequence of coefficient $(\mathbf{b}_{n,k})$ i.e.
the sequence-to-sequence transformation is
\begin{equation*}
t_n:=\sum_{k=0}^n \mathbf{b}_{n,k} s_k.
\end{equation*}
If $t_n \rightarrow s \; as \; n \rightarrow \infty,$ then the series $\sum\limits_{n=0}^\infty \mathbf{a}_n$ is said to be summable by the lower triangular matrix method to $s.$ We use the notation $(\mathbf{T})$ for lower triangular matrix summation. \\
The $(\textbf{T})$ summation of Jacobi series (\ref{1.1}) at the point $y=1$ is
\begin{equation*}
\mathbf{t}_n^{(\gamma,\delta)}(f,1)=\sum_{k=0}^n \mathbf{b}_{n,k} \mathbf{s}_k^{(\gamma,\delta)}(f,1),
\end{equation*}
where $\mathbf{s}_k^{(\gamma,\delta)}(f,1)$ is the $n$th partial sum of the Fourier--Jacobi series (\ref{1.1}) at the point one.
The following result is established by Dhakal on $(\textbf{T})$ summability of Fourier--Jacobi series (\ref{1.1}) of the function $f \in L_{[-1,1]}^{1,(\gamma,\delta)}.$
\begin{theorem}\cite{dha}\label{5t.2}
Let $M=(\mathbf{b}_{n,k})$ be an infinite lower triangular regular matrix such that the element $(\mathbf{b}_{n,k})$ is positive, monotonic increasing in $k,\;0 \leq k \leq n,$
\begin{equation}\label{1.18}
D_{n,\tau}=\sum_{k=n-\tau}^n \mathbf{b}_{n,k},\; D_{n,n}=1,\; for \; all \; n,
\end{equation}
and
\begin{equation}\label{1.19}
n^{\gamma+1/2}D_{n,[1/\beta]}=o(1),0< \beta <\pi \;as \; n \rightarrow \infty,
\end{equation}
where $\tau=Integral\; part \; of \; \frac{1}{\phi}=[\frac{1}{\phi}]$ and $0 \leq \phi \leq \pi.$
For $-1/2 \leq \gamma \leq 1/2,\; \delta >-1/2,$ if
\begin{equation}\label{1.20}
\int_{1-t}^1|f(u)-A|du=o\Big(\frac{t}{\xi(1/t) \log (1/t)}\Big)\; as \; t\rightarrow 0,
\end{equation}
then the Jacobi series (\ref{1.1}) is $(\mathbf{T})$ summable to the sum $A$ at $y=1$ provided $\xi(t)$ is positive monotonic non-decreasing function of $t$ such that
\begin{equation}\label{1.21}
\sum_{k=a}^n \frac{D_{n,k}}{k^{(2\gamma+3)/2}\xi(k) \log k}=O\Big(\frac{1}{n^{(2\gamma+1)/2}}\Big),
\end{equation}
and satisfy the antipole condition
\begin{equation}\label{1.22}
\int_0^{1/n} t^{\delta-1/2}|f(-cost)-A| dt=o(1)\; as \; n \rightarrow \infty.
\end{equation}
\end{theorem}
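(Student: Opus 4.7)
The plan is to realise the matrix transform $\mathbf{t}_n^{(\gamma,\delta)}(f,1)$ as a weighted integral of $f-A$ against a ``$\mathbf{T}$--kernel'' and then decompose that integral into three parts after the substitution $u=\cos\theta$: the pole range $\theta\in[0,1/n]$, the middle range $\theta\in[1/n,\pi-1/n]$, and the antipole range $\theta\in[\pi-1/n,\pi]$. Since $M$ is regular, $\sum_{k=0}^{n}\mathbf{b}_{n,k}\to 1$, so it is enough to show that $\mathbf{t}_n^{(\gamma,\delta)}(f,1)-A\to 0$. Using the Christoffel--Darboux kernel
\[
K_k^{(\gamma,\delta)}(1,u):=\sum_{j=0}^{k}p_j^{(\gamma,\delta)}(u)\,p_j^{(\gamma,\delta)}(1),
\]
we have $\mathbf{s}_k^{(\gamma,\delta)}(f,1)-A=\int_{-1}^{1}[f(u)-A]\,K_k^{(\gamma,\delta)}(1,u)\,\rho^{(\gamma,\delta)}(u)\,du$ and hence
\[
\mathbf{t}_n^{(\gamma,\delta)}(f,1)-A=\int_{-1}^{1}[f(u)-A]\,\mathbf{T}_n(u)\,\rho^{(\gamma,\delta)}(u)\,du+o(1),\quad \mathbf{T}_n(u):=\sum_{k=0}^{n}\mathbf{b}_{n,k}K_k^{(\gamma,\delta)}(1,u).
\]

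Next, I would estimate $\mathbf{T}_n(u)$ on each of the three ranges. On the pole range, the trivial bound $|K_k^{(\gamma,\delta)}(1,\cos\theta)|=O(k^{2\gamma+2})$ together with the monotone increase of $\mathbf{b}_{n,k}$ in $k$ and the hypothesis (\ref{1.19}) gives the required decay, because the mass of the top block of length $\tau=[1/\phi]$ is $D_{n,[1/\beta]}=o(n^{-(\gamma+1/2)})$. On the middle range, the classical oscillatory asymptotic for $p_k^{(\gamma,\delta)}(\cos\theta)$ produces a kernel of magnitude $O\bigl(k^{\gamma+1/2}\theta^{-\gamma-3/2}\bigr)$; performing Abel's summation in the definition of $\mathbf{T}_n(u)$ and using the non--negativity of $\Delta_k\mathbf{b}_{n,k}$ (guaranteed by monotonicity) converts the oscillatory sum into a sum of the form $\sum_{k} D_{n,k}/\bigl(k^{(2\gamma+3)/2}\xi(k)\log k\bigr)$, which by (\ref{1.21}) is $O(n^{-(2\gamma+1)/2})$. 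Pairing this bound with the Dini--Lipschitz-type estimate (\ref{1.20}) for $\int_{1-t}^{1}|f(u)-A|\,du$ then forces the middle contribution to be $o(1)$.

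The antipole piece is handled by a different device: near $u=-1$ the Jacobi asymptotic for $p_k^{(\gamma,\delta)}(\cos\theta)$ with $\theta$ close to $\pi$ contributes a factor $(1+u)^{-(\delta/2+1/4)}$, so together with the weight $(1+u)^{\delta}$ the integrand picks up a power $(1+u)^{\delta/2-1/4}$, which after the change of variable $u=-\cos t$ matches precisely the $t^{\delta-1/2}$ appearing in (\ref{1.22}); the antipole condition therefore kills this piece directly. The principal obstacle is the middle-range step: one must arrange the Abel rearrangement so that the factor $\xi(1/t)\log(1/t)$ in the denominator of (\ref{1.20}) is exactly cancelled by the $\xi(k)\log k$ in (\ref{1.21}), leaving the crucial decay rate $n^{-(2\gamma+1)/2}$. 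Once each of the three contributions is shown to be $o(1)$, the assertion $\mathbf{t}_n^{(\gamma,\delta)}(f,1)\to A$ follows from the regularity of $M$.
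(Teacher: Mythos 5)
This statement is not proved in the paper at all: Theorem~\ref{5t.2} is quoted verbatim from Dhakal \cite{dha} as background material (the paper's own contributions begin in Sections~3 and~4, where this theorem is merely invoked to prove Theorem~\ref{5T.2} about the random series). So there is no proof in the paper to compare your attempt against; the relevant comparison is with Dhakal's original argument, which does follow the classical template you describe: represent $\mathbf{s}_k^{(\gamma,\delta)}(f,1)-A$ through the Christoffel--Darboux kernel at the endpoint, form the matrix-transformed kernel $\sum_k \mathbf{b}_{n,k}K_k^{(\gamma,\delta)}(1,\cos\theta)$, split the range of integration after $u=\cos\theta$, and use the endpoint and interior asymptotics of Jacobi polynomials together with the Dini--Lipschitz-type condition (\ref{1.20}), the kernel-sum condition (\ref{1.21}), and the antipole condition (\ref{1.22}).

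As a proof, however, what you have written is an outline rather than an argument, and the one step you yourself flag as ``the principal obstacle'' is exactly the step that is not done. The factor $\xi(k)\log k$ in (\ref{1.21}) does not emerge from the kernel asymptotics or from a single Abel summation of the $\mathbf{b}_{n,k}$; it enters only when the oscillatory middle-range integral is integrated by parts against $\Phi(t)=\int_{1-t}^{1}|f(u)-A|\,du$ and the hypothesis (\ref{1.20}) is substituted, so a second summation/integration by parts in the $t$-variable is required and its boundary terms must be controlled -- none of which appears in your sketch. Your three-way split is also slightly off the standard one: condition (\ref{1.20}) governs a fixed neighbourhood of the pole (split at $\theta=1/n$ and at a fixed small $\delta_{0}$), the genuinely interior range $[\delta_{0},\pi-\epsilon_{0}]$ is disposed of by regularity of $M$ and Riemann--Lebesgue, and only the range adjacent to $\theta=\pi$ uses (\ref{1.22}); conflating the pole-adjacent and interior ranges into one ``middle range'' obscures where each hypothesis is actually used. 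Finally, the pole-range claim that (\ref{1.19}) ``gives the required decay'' needs the explicit pairing of the bound $K_k^{(\gamma,\delta)}(1,u)=O(k^{2\gamma+2})$ with the local integrability of $f-A$ near $u=1$ coming from (\ref{1.20}); as stated it is an assertion, not an estimate. In short, the route is the right one, but the proof is not there yet.
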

\section{Summability of random Fourier--Jacobi series in the space $C_{[-1,1]}^{(\eta,\tau)}$ }
Consider the random Fourier--Jacobi series
\begin{equation}\label{1.11.0}
\sum_{n=0}^\infty a_nA_n(\omega)p_n^{(\gamma,\delta)}(y),
\end{equation}
where the scalars $a_n$ are the Fourier--Jacobi coefficients of a function $f \in C_{[-1,1]}^{(\eta,\tau)},\; \eta,\tau \geq 0$ defined as in (\ref{1.2})
and the random variables $A_n(\omega)$ are defined as in (\ref{1.13.0}).
The $\Theta$--sum of the random Fourier--Jacobi series (\ref{1.11.0}) is
\begin{equation}\label{1.8.1}
\mathbf{S}_n^{\Theta,(\gamma,\delta)}(f,y,\omega):=\sum_{k=0}^{n-1} \theta_{k,n}a_k A_k(\omega)p_k^{(\gamma,\delta)}(y).
\end{equation}
We will use the following notations on $\Theta$--summation matrix same as in
 Chripk{\'o} \cite{C}.
\begin{eqnarray*}
&&T_1 : \lim_{n \rightarrow \infty}(1- \theta_{k,n})=0,\; for \; all \; fixed \; k=0,1,2,\dots.\\
&&T_2 : \theta_{n-1,n}=O
\Big(\frac{1}{n}\Big),\;( n\in \mathbb{N}).\\
&&T_3: \bigtriangleup^2 \theta_{k-1,n}=
O\Big(\frac{1}{n^2}\Big),\; (k=1,2,\dots, n-1, n\in \mathbb{N}).\\
&&T_4:\bigtriangleup^2 \theta_{k-1,n} \; (k=1,2,\dots, n-1, n\in \mathbb{N})\; is \; of \; constant \; sign.\\
&&T_5 : sgn \bigtriangleup^2 \theta_{k-1,n}= sgn\; \theta_{n-1,n} (k=1,2,\dots, n-1, n\in \mathbb{N}),
\end{eqnarray*}
where $\bigtriangleup^2\theta_{k,n}:=\bigtriangleup\theta_{k+1,n}-\bigtriangleup\theta_{k,n},
\quad \bigtriangleup \theta_{k,n}:=\theta_{k+1,n}-\theta_{k,n}, \; (\theta_{n,n}:=0).$\\
Furthermore, the following notations will be used.\\
$(A)$ if $(T_1),$ $(T_2)$ and $(T_3)$ hold,\\
$(B)$ if $(T_1),$ $(T_2)$ and $(T_4)$ hold,\\
$(C)$ if $(T_1),$ $(T_5)$ hold.\\
The convergence of $\Theta$--sum (\ref{1.8.1}) of the random Fourier--Jacobi series (\ref{1.11.0}) is proved in Theorem \ref{T5} by imposing some restrictions on $\gamma,\delta,\eta,\tau$ and the entries of summation matrix $\Theta.$
To prove this theorem, we need the following result.
\begin{lemma} \label{l1} \cite{NPM}
Let $X(t,\omega)$ be a symmetric stable process of index  $\alpha,\;1 \leq \alpha \leq 2$ and $f(t)$ be any function in $L^p_{[a,b]},\;p\geq 1$, then for all $\epsilon >0,$
$$P\Bigg(\Bigg|\int_a^b f(t)dX(t,\omega)\Bigg|> \epsilon\Bigg)
\leq
\frac{C2^{\alpha+1}}{(\alpha+1)\epsilon'^{\alpha}}
\int_a^b|f(t)|^\alpha dt,
$$
where $C$ is a positive constant and $\epsilon'<\epsilon.$
\end{lemma}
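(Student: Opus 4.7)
The plan is to reduce the tail estimate for the stochastic integral $\int_a^b f\,dX$ to a single-variable tail estimate for a symmetric $\alpha$-stable random variable, exploiting the fact that such integrals remain stable under linear operations. I would first approximate $f\in L^p_{[a,b]}$ by simple functions $f_m=\sum_{j=1}^{N_m} c_j^{(m)}\mathbf{1}_{I_j^{(m)}}$ with $\int_a^b |f_m-f|^\alpha\,dt\to 0$; this is legitimate because $p\geq 1$ and $[a,b]$ is bounded. For such $f_m$, the integral $\int_a^b f_m\,dX=\sum_j c_j^{(m)}\bigl(X(t_j^{(m)})-X(t_{j-1}^{(m)})\bigr)$ is a finite linear combination of independent symmetric $\alpha$-stable increments, so the stability property gives that $\int_a^b f_m\,dX$ is itself symmetric $\alpha$-stable with characteristic function $u\mapsto \exp\bigl(-c|u|^\alpha\int_a^b |f_m(t)|^\alpha\,dt\bigr)$ for a universal constant $c$.

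Next I would establish the one-shot tail inequality $P(|Y|>\epsilon)\leq C_\alpha\,\sigma^\alpha/\epsilon'^{\alpha}$ for any symmetric $\alpha$-stable $Y$ with scale $\sigma$ and any $\epsilon'<\epsilon$. The cleanest route is via the L\'evy--Khintchine representation: decompose $Y$ into a part $Y_{\leq\epsilon'}$ coming from jumps of size at most $\epsilon'$ plus a compound Poisson part $Y_{>\epsilon'}$ coming from the larger jumps. Then $\{|Y|>\epsilon\}\subset\{Y_{>\epsilon'}\neq 0\}$ up to a remainder controlled by the L\'evy tail, and for symmetric $\alpha$-stable laws the L\'evy density is $c_\alpha|x|^{-1-\alpha}$, whose tail integral evaluates to a constant multiple of $\sigma^\alpha/\epsilon'^{\alpha}$. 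The explicit factor $2^{\alpha+1}/(\alpha+1)$ appearing in the statement can be traced to the evaluation $\int_0^{2}s^\alpha\,ds=2^{\alpha+1}/(\alpha+1)$, which arises naturally when rescaling the truncated kernel; the buffer $\epsilon'<\epsilon$ is introduced precisely to absorb the jump produced by truncation into a clean power of $\epsilon'$ rather than $\epsilon$.

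Finally I would pass to the limit by the standard splitting
$P\bigl(|\int_a^b f\,dX|>\epsilon\bigr)\leq P\bigl(|\int_a^b(f-f_m)\,dX|>\epsilon-\epsilon'\bigr)+P\bigl(|\int_a^b f_m\,dX|>\epsilon'\bigr)$, applying the established bound to $f_m$, using convergence of $\int f_m\,dX$ to $\int f\,dX$ in probability together with $\int|f_m|^\alpha\to\int|f|^\alpha$, and letting $m\to\infty$. The main obstacle is the middle step: qualitative tail estimates of order $\sigma^\alpha/\epsilon^\alpha$ for symmetric stable laws are classical (and immediate from the self-similarity of $Y$), but tracking the explicit prefactor requires careful bookkeeping of the L\'evy truncation and a judicious choice of threshold $\epsilon'$, and this is where I expect the bulk of the technical work to lie.
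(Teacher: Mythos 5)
The paper does not prove this lemma; it is quoted verbatim from Nayak--Pattanayak--Mishra \cite{NPM}, so the comparison is with the argument in that source. Your overall skeleton (approximate $f$ by simple functions, use stability of linear combinations of independent increments to see that $\int_a^b f_m\,dX$ is symmetric $\alpha$-stable with scale parameter $\bigl(\int_a^b|f_m|^\alpha\bigr)^{1/\alpha}$, prove a one-dimensional tail bound, then pass to the limit with the $\epsilon'<\epsilon$ buffer) is exactly right and matches the standard proof. The problem is the middle step. You propose to get the tail bound from the L\'evy--Khintchine decomposition into small and large jumps, with the L\'evy density $c_\alpha|x|^{-1-\alpha}$. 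This fails outright at $\alpha=2$, which the lemma explicitly includes: the Gaussian case has zero L\'evy measure and no jumps to decompose. Worse, even for $\alpha<2$ the event $\{|Y|>\epsilon\}$ is not contained in $\{Y_{>\epsilon'}\neq 0\}$ ``up to a L\'evy-tail remainder'': the compensated small-jump part must be controlled by Chebyshev via its variance $\int_{|x|\le\epsilon'}x^2\,\nu(dx)\asymp \sigma^\alpha(\epsilon')^{2-\alpha}/(2-\alpha)$, which produces a constant that blows up as $\alpha\to 2^-$. So your route cannot deliver the stated bound uniformly on $1\le\alpha\le 2$, let alone the explicit constant $2^{\alpha+1}/(\alpha+1)$.

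The constant is in fact a fingerprint of a different, more elementary device: the classical truncation inequality $P(|Y|\ge 2/u)\le u^{-1}\int_{-u}^{u}\bigl(1-\mathrm{Re}\,\phi_Y(v)\bigr)\,dv$. With $\phi_Y(v)=\exp\bigl(-C|v|^\alpha\int_a^b|f_m|^\alpha\,dt\bigr)$ and $1-e^{-x}\le x$, one gets
\begin{equation*}
\frac{1}{u}\int_{-u}^{u}C|v|^\alpha\Bigl(\int_a^b|f_m|^\alpha\,dt\Bigr)dv=\frac{2Cu^{\alpha}}{\alpha+1}\int_a^b|f_m|^\alpha\,dt,
\end{equation*}
and the choice $u=2/\epsilon'$ yields precisely $C2^{\alpha+1}/\bigl((\alpha+1)(\epsilon')^{\alpha}\bigr)$, uniformly for all $\alpha\in[1,2]$ including the Gaussian endpoint; your observation that $\int_0^2 s^\alpha\,ds=2^{\alpha+1}/(\alpha+1)$ is the source of the constant is correct, but it arises from this characteristic-function integral, not from a L\'evy truncation. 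Replacing your jump decomposition by this inequality repairs the proof; your first and last steps can then be kept as written.
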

\begin{theorem}\label{T5}
The $\Theta$--sum (\ref{1.8.1}) of random Fourier--Jacobi series (\ref{1.11.0}) converges in probability to the stochastic integral
\begin{equation}\label{1.14}
\int_{-1}^1 f(y,t) \rho^{(\eta,\tau)}(t) dX(t,\omega),
\end{equation}
if the parameters $\gamma,\delta \geq -1/2,\eta,\tau \geq 0$ satisfy the following conditions
\begin{equation}\label{1.10}
\frac{\gamma}{2}-\frac{1}{4}< \eta < \frac{\gamma}{2}+\frac{3}{4},\; and \;\frac{\delta}{2}-\frac{1}{4}< \tau < \frac{\delta}{2}+\frac{3}{4},
\end{equation} with the entries of $\Theta$--matrix satisfying either (A), (B) or (C).
\end{theorem}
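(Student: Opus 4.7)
My strategy is to recast the finite partial sum (\ref{1.8.1}) as a single stochastic integral, apply Lemma \ref{l1} to convert convergence in probability into a deterministic estimate, and then invoke Chripk\'o's $\Theta$-summability theorem to close the argument. First I would substitute the definition (\ref{1.13.0}) of $A_k(\omega)$ into (\ref{1.8.1}); since the sum is finite and each $p_k^{(\gamma,\delta)}(t)\rho^{(\eta,\tau)}(t)$ is continuous on $[-1,1]$, linearity of the stochastic integral yields
\begin{equation*}
\mathbf{S}_n^{\Theta,(\gamma,\delta)}(f,y,\omega)=\int_{-1}^1 \mathcal{K}_n(y,t)\,\rho^{(\eta,\tau)}(t)\,dX(t,\omega),\qquad \mathcal{K}_n(y,t):=\sum_{k=0}^{n-1}\theta_{k,n}\,a_k\,p_k^{(\gamma,\delta)}(y)\,p_k^{(\gamma,\delta)}(t).
\end{equation*}

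The next step is to identify the limit $f(y,t)$ as the weighted $\Theta$-limit of $\mathcal{K}_n(y,\cdot)$. For each fixed $y$, $\mathcal{K}_n(y,\cdot)$ is the $\Theta$-sum in the variable $t$ of a Fourier--Jacobi-type expansion whose $k$-th coefficient is $a_k p_k^{(\gamma,\delta)}(y)$; applying Chripk\'o's theorem \cite{C} under the parameter condition (\ref{1.10}) and the matrix hypothesis (A), (B), or (C) should give $\|\mathcal{K}_n(y,\cdot)-f(y,\cdot)\|_{\rho^{(\eta,\tau)}}\to 0$. Writing $D_n(\omega)$ for the difference between $\mathbf{S}_n^{\Theta,(\gamma,\delta)}(f,y,\omega)$ and its conjectured limit, $D_n$ is itself a stochastic integral with integrand $(\mathcal{K}_n(y,t)-f(y,t))\rho^{(\eta,\tau)}(t)$, and Lemma \ref{l1} with $\epsilon'<\epsilon$ produces
\begin{equation*}
P\bigl(|D_n|>\epsilon\bigr)\leq\frac{C\,2^{\alpha+1}}{(\alpha+1)\epsilon'^{\alpha}}\int_{-1}^1\bigl|\mathcal{K}_n(y,t)-f(y,t)\bigr|^{\alpha}\rho^{(\eta,\tau)}(t)^{\alpha}\,dt.
\end{equation*}
The weighted uniform convergence bounds the integrand pointwise by some $\varepsilon_n^{\alpha}\to 0$, so the right-hand side is at most $2\varepsilon_n^{\alpha}$ and vanishes as $n\to\infty$.

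\textbf{Main obstacle.} The delicate point is the deterministic convergence $\mathcal{K}_n(y,\cdot)\to f(y,\cdot)$ in the weighted sup-norm, because $\mathcal{K}_n(y,\cdot)$ is not literally the $\Theta$-sum of a single fixed function of $t$ in $C^{(\eta,\tau)}_{[-1,1]}$: its coefficients carry the extra $y$-dependent factor $p_k^{(\gamma,\delta)}(y)$. I expect the cleanest route is to expand $a_k=\int_{-1}^1 f(s)p_k^{(\gamma,\delta)}(s)\rho^{(\gamma,\delta)}(s)\,ds$ and interchange sum and integral, so that $\mathcal{K}_n(y,t)$ becomes an integral of $f(s)$ against the triple-product Jacobi kernel $\sum_{k}\theta_{k,n}p_k^{(\gamma,\delta)}(y)p_k^{(\gamma,\delta)}(s)p_k^{(\gamma,\delta)}(t)$, whose uniform weighted boundedness is precisely what hypotheses (A), (B), (C) and the parameter window (\ref{1.10}) are designed to guarantee through Christoffel--Darboux-type bounds for Jacobi polynomials. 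Condition (\ref{1.10}), together with $\eta,\tau\geq 0$, also keeps $\rho^{(\eta,\tau)}$ bounded on $[-1,1]$, so the $L^{\alpha}$-integrability required by Lemma \ref{l1} for $\alpha\in[1,2]$ is automatic.
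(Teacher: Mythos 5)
Your proposal follows essentially the same route as the paper: rewrite the $\Theta$--sum as a single stochastic integral with kernel $\sum_{k=0}^{n-1}\theta_{k,n}a_k p_k^{(\gamma,\delta)}(y)p_k^{(\gamma,\delta)}(t)$, apply Lemma \ref{l1} to reduce convergence in probability to a deterministic weighted $L^{\alpha}$ estimate, and invoke Chripk\'o's theorem under (\ref{1.10}) and (A), (B) or (C) to make that estimate vanish. The only differences are that you omit the paper's (logically redundant) intermediate Cauchy-sequence step, and that you explicitly flag the $y$-dependence of the kernel's coefficients as the point needing care when citing Chripk\'o --- a subtlety the paper's own proof passes over silently.
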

\begin{proof}
The integral form of the $n$th partial $\Theta$--sum of the random Fourier--Jacobi series (\ref{1.11.0}) involving Jacobi polynomials is
\begin{eqnarray*}
\mathbf{S}_n^{\Theta,(\gamma,\delta)}(f,y,\omega)&:=& \sum_{k=0}^{n-1} \theta_{k,n} a_k A_k(\omega) p_k^{(\gamma,\delta)}(y)\\
&=& \sum_{k=0}^{n-1} \theta_{k,n} a_k p_k^{(\gamma,\delta)}(y) \int_{-1}^1 p_k^{(\gamma,\delta)}(t)\rho^{(\eta,\tau)}(t)dX(t,\omega)\\
&=& \int_{-1}^1 \sum_{k=0}^{n-1} \theta_{k,n} a_k p_k^{(\gamma,\delta)}(y)p_k^{(\gamma,\delta)}(t)\rho^{(\eta,\tau)}(t)dX(t,\omega)\\
&=&\int_{-1}^1 \mathbf{s}_n^{\Theta,(\gamma,\delta)}(f,y,t)\rho^{(\eta,\tau)}(t)dX(t,\omega),
\end{eqnarray*}
where
\begin{equation*}
\mathbf{s}_n^{\Theta,(\gamma,\delta)}(f,y,t):=\sum_{k=0}^{n-1} \theta_{k,n} a_k p_k^{(\gamma,\delta)}(t)p_k^{(\gamma,\delta)}(y).
\end{equation*}
Now
 \begin{eqnarray*}
 &&\mathbf{S}_n^{\Theta,(\gamma,\delta)}(f,y,\omega)-\mathbf{S}_m^{\Theta,(\gamma,\delta)}(f,y,\omega)\\
&=&\int_{-1}^1\Big(\mathbf{s}_n^{(\Theta,(\gamma,\delta)}(f,y,t)-\mathbf{s}_m^{\Theta,(\gamma,\delta)}(f,y,t)\Big)\rho^{(\eta, \tau)}(t)dX(t,\omega).
 \end{eqnarray*}
By the inequality in Lemma \ref{l1}, we obtain
\begin{eqnarray*}
  &&P\Big(\Big|\mathbf{S}_n^{\Theta,(\gamma,\delta)}(f,y,\omega)-\mathbf{S}_m^{\Theta,(\gamma,\delta)}(f,y,\omega)\Big|>\epsilon\Big)\\
  &=&P\Big(\Big|\Big(\mathbf{s}_n^{\Theta,(\gamma,\delta)}(f,y,t)-\mathbf{s}_m^{\Theta,(\gamma,\delta)}(f,y,t)\Big)\rho^{(\eta, \tau)}(t)dX(t,\omega)\Big|>\epsilon\Big)\\
  & \leq & \frac{C2^{\alpha+1}}{(\alpha+1)\epsilon'^\alpha}
\int_{-1}^1 \Big|\Big(\mathbf{s}_n^{\Theta,(\gamma,\delta)}(f,y,t)-\mathbf{s}_m^{\Theta,(\gamma,\delta)}(f,y,t)\Big)\rho^{(\eta,\tau)}(t)\Big|^\alpha dt,\; for \; \epsilon'<\epsilon.
\end{eqnarray*}
Invoking Chripk{\'o} result (see \cite[Theorem~3.2]{C}) on the convergence of Fourier--Jacobi series (\ref{1.1}),
we know that $\mathbf{s}_n^{\Theta,(\gamma,\delta)}$ converges uniformly in $C_{[-1,1]}^{(\eta,\tau)},$ if $a_n$ are the Fourier--Jacobi coefficients of a function $f$ in $C_{[-1,1]}^{(\eta,\tau)}$ and
 $\gamma,\delta,\eta,\tau$ satisfy the conditions in equation (\ref{1.10}).
 Hence $\mathbf{s}_n^{\Theta,(\gamma,\delta)}$  is a Cauchy sequence.\\
This yields, for $\alpha \in [1,2]$ and $f \in C_{[-1,1]}^{(\eta,\tau)},$
\begin{equation*}
\lim_{n,m \rightarrow \infty}\int_{-1}^1\Big|\Big(\mathbf{s}_n^{\Theta,(\gamma,\delta)}(f,y,t)-\mathbf{s}_m^{\Theta,(\gamma,\delta)}(f,y,t)\Big)\rho^{(\eta, \tau)}(t)\Big|^\alpha dt=0.
\end{equation*}
This result holds uniformly for all $y,t$ in the range $[-1,1],$ implying that $\mathbf{S}_n^{\Theta,(\gamma,\delta)}(f,y,\omega)$ is a Cauchy sequence in the sense that it converges in probability.
Hence, it will converge to a random variable.
Now we will show that the sequence $\mathbf{S}_n^{\Theta,(\gamma,\delta)}(f,y,\omega)$ converges to the stochastic integral $\int_{-1}^1 f(y,t)\rho^{(\eta,\tau)}(t)dX(t,\omega)$ in probability.
By Lemma \ref{l1},
\begin{eqnarray*}
&&P\Bigg(\Big|\mathbf{S}_n^{\Theta,(\gamma,\delta)}(f,y,\omega)-\int_{-1}^1f(y,t)\rho^{(\eta, \tau)}(t)dX(t,\omega)\Big|> \epsilon\Bigg)\\
&=&P\Bigg(\Big|\int_{-1}^1\Big(\mathbf{s}_n^{\Theta,(\gamma,\delta)}(f,y,t)-f(y,t)\Big)\rho^{(\eta, \tau)}(t) dX(t,\omega)\Big| > \epsilon\Bigg)\\
&\leq& \frac{C2^{\alpha+1}}{(\alpha+1)\epsilon'^\alpha}
\int_{-1}^1 \Big|\Big(\mathbf{s}_n^{\Theta,(\gamma,\delta)}(f,y,t)-f(y,t)\Big)\rho^{(\eta, \tau)}(t)\Big|^\alpha dt, \; for \;\epsilon'<\epsilon.
\end{eqnarray*}
 If $\gamma,\delta,\eta,\tau$ satisfy the conditions in (\ref{1.10}), then by Chripk{\'o} \cite[Theorem~3.2]{C},
\begin{equation*}
\lim_{n\rightarrow \infty}\int_{-1}^1\Big|\Big(\mathbf{s}^{\Theta,(\gamma,\delta)}_n(f,t)-f(t)\Big)\rho^{(\eta, \tau)}(t)\Big| dt=0.
\end{equation*}
Hence, for $\alpha \in [1,2],$
\begin{equation*}
\lim_{n\rightarrow \infty}\int_{-1}^1\Big|\Big(\mathbf{s}_n^{\Theta,(\gamma,\delta)}(f,y,t)-f(y,t)\Big)\rho^{(\eta, \tau)}(t)\Big|^\alpha dt=0.
\end{equation*}
This proves that the $\Theta$--sum (\ref{1.8.1}) of random Fourier--Jacobi series (\ref{1.11.0}) converges in probability to the stochastic integral (\ref{1.14}).
\end{proof}
%For $f \in C_{[-1,1]}^{(\eta,\tau)},$ if the weights $\gamma,\delta,\eta,\tau$ satisfy the conditions (\ref{1.10}), then by \cite[Theorem~ 3.2]{C}
%the $n$th partial sum $\mathbf{s}_n^{\Theta,(\gamma,\delta)}(f,t)$ converges to $f(t).$

The following theorem demonstrates that the random Fourier--Jacobi series (\ref{1.11.0}) is  Ces{\'a}ro $(C,\phi),\phi \geq 1$ summable, if $a_n$ are the Fourier--Jacobi coefficients of some function $f$ in the
space $C_{[-1,1]}^{(\eta,\tau)}$ under the same conditions on $\gamma,\delta,\; \eta$ and $\tau$ as stated in previous theorem.
\begin{theorem} \label{T6}
The random Fourier--Jacobi series (\ref{1.11.0}) is Ces{\`a}ro $(C,\phi), \; \phi \geq 1$ summable in probability to the stochastic integral
(\ref{1.14}), if  $\eta,\;\tau\geq 0,\gamma,\delta \geq -1/2$ and the entries of $\Theta$--matrix satisfy the conditions (\ref{1.10}) in Theorem \ref{T5}.
\end{theorem}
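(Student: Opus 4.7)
The plan is to recognize that Ces\`aro $(C,\phi)$ summation is precisely the instance of $\Theta$-summation described in Case-2 of Section~2, with entries
\[
\theta_{k,n}=\frac{B_{n-k-1}^{(\phi)}}{B_{n-1}^{(\phi)}},\qquad 0\le k\le n-1,
\]
so the $\Theta$-sum in (\ref{1.8.1}) specialises to the Ces\`aro means of the random Fourier--Jacobi series (\ref{1.11.0}). Since the hypotheses on $\gamma,\delta,\eta,\tau$ in Theorem~\ref{T6} coincide with those of Theorem~\ref{T5}, the entire argument reduces to verifying that for every fixed $\phi\ge 1$ this particular choice of $(\theta_{k,n})$ fulfils at least one of the structural packages (A), (B) or (C) used in Theorem~\ref{T5}.

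First I would check $(T_1)$. Using the asymptotic $B_m^{(\phi)}\sim m^{\phi}/\Gamma(\phi+1)$ as $m\to\infty$, for any fixed $k$ the ratio $\theta_{k,n}\to 1$, so $1-\theta_{k,n}\to 0$. Next I would check $(T_2)$: at the top index,
\[
\theta_{n-1,n}=\frac{1}{B_{n-1}^{(\phi)}}=\frac{\Gamma(\phi+1)\,\Gamma(n)}{\Gamma(n+\phi)}=O(n^{-\phi})=O(1/n),
\]
since $\phi\ge 1$. Both estimates are standard and purely combinatorial.

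The main step, which I expect to be the only mildly technical one, is the second-difference condition on $\theta_{k,n}$. Writing
\[
B_{n-k-1}^{(\phi)}=\frac{\Gamma(n-k+\phi)}{\Gamma(\phi+1)\,\Gamma(n-k)},
\]
the map $k\mapsto B_{n-k-1}^{(\phi)}$ behaves like a smooth function of $n-k$ of polynomial growth of order $\phi$, and applying the Gamma-function asymptotic expansion gives $\bigtriangleup^{2}\theta_{k-1,n}=O(n^{-2})$ uniformly in $k\in\{1,\dots,n-1\}$, which is $(T_3)$. Moreover, for $\phi\ge 1$ the sequence $m\mapsto B_m^{(\phi)}$ is convex, so $\bigtriangleup^{2}\theta_{k-1,n}$ has constant sign matching that of $\theta_{n-1,n}>0$, giving $(T_4)$ and $(T_5)$ as well. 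In particular each of (A), (B) and (C) is verified.

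With these structural conditions in hand, Theorem~\ref{T5} applies verbatim and yields that the Ces\`aro $(C,\phi)$ sum of (\ref{1.11.0}) converges in probability to the stochastic integral (\ref{1.14}). The main obstacle is the uniform second-difference estimate via Pochhammer-symbol asymptotics; everything else is a direct invocation of Theorem~\ref{T5} and the classical Ces\`aro convergence result of Chripk\'o in $C_{[-1,1]}^{(\eta,\tau)}$ already cited in its proof.
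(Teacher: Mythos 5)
Your proof is correct in substance but takes a genuinely different route from the paper. The paper does not reduce Theorem \ref{T6} to Theorem \ref{T5} by checking the matrix conditions; instead it re-runs the probabilistic argument from scratch: it writes the Ces\`aro means of the random series in the integral form $\int_{-1}^1\sigma_n^{(\phi),\gamma,\delta}(f,y,t)\rho^{(\eta,\tau)}(t)\,dX(t,\omega)$, applies Lemma \ref{l1} to get the Cauchy property and the convergence in probability, and quotes Chripk\'o's Corollary~3.4 of \cite{C} (the deterministic $(C,\phi)$, $\phi\ge 1$ result) for the needed uniform convergence of $\sigma_n^{(\phi),\gamma,\delta}$. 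Your reduction --- identify the Ces\`aro matrix as Case-2 of the $\Theta$-scheme, verify one of the packages (A), (B), (C), and invoke Theorem \ref{T5} verbatim --- is cleaner and more self-contained relative to Theorem \ref{T5}, at the price of the combinatorial verification; the paper's version avoids that verification by outsourcing it to Chripk\'o. One inaccuracy in your verification: using $\bigtriangleup^2\theta_{k-1,n}=B_{n-k}^{(\phi-2)}/B_{n-1}^{(\phi)}$, the uniform bound over $k$ is of order $n^{-\phi}$ when $1<\phi<2$ (the maximum of $B_m^{(\phi-2)}$ is then attained at small $m$ and is $O(1)$), so $(T_3)$, and hence package (A), actually fails for $1<\phi<2$; it holds only for $\phi\ge 2$. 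This does not damage your proof, since $(T_1)$, $(T_2)$ and the constant sign of $\bigtriangleup^2\theta_{k-1,n}$ (package (B)) do hold for every $\phi\ge 1$, but you should not claim that all three packages are satisfied.
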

\begin{proof}
The $n$th $(C,\phi),\phi \geq 1$ means of random Fourier--Jacobi series (\ref{1.11.0}) is
\begin{equation}\label{1.23}
\mathbf{Q}_n^{(\phi),\gamma,\delta}(f,y,\omega):=\sum_{k=0}^{n-1} \frac{B_{n-k-1}^{(\phi)}}{B_{n-1}^{(\phi)}} \mathbf{S}_k^{(\gamma,\delta)}(f,y,\omega),
\end{equation}
where $B_m^n=\displaystyle{\frac{\Gamma(m+n+1)}{\Gamma(m+1)\Gamma(n+1)}}$ and $\mathbf{S}_k^{(\gamma,\delta)}(f,y,\omega)$ is the $n$th partial sum of random Fourier--Jacobi series (\ref{1.11.0}).
The $(C,\phi)$ means (\ref{1.23}) can be expressed in the integral form as
\begin{eqnarray*}
\mathbf{Q}_n^{(\phi),\gamma,\delta}(f,y,\omega)&:=&\sum_{k=0}^{n-1} \frac{B_{n-k-1}^{(\phi)}}{B_{n-1}^{(\phi)}} \mathbf{S}_k^{(\gamma,\delta)}(f,y,\omega)\\
%&=& \sum_{k=0}^{n-1} \frac{B_{n-k-1}^{(\phi)}}{B_{n-1}^{(\phi)}} \int_{-1}^1 \mathbf{s}_n^{(\gamma,\delta)}(f,y,t) \rho^{(\eta,\tau)}(t)dX(t,\omega)\\
%&=& \int_{-1}^1 \sum_{k=0}^{n-1} \frac{B_{n-k-1}^{(\phi)}}{B_{n-1}^{(\phi)}} \mathbf{s}_n^{(\gamma,\delta)}(f,y,t) \rho^{(\eta,\tau)}(t)dX(t,\omega)\\
&=&\int\limits_{-1}^1\sigma_n^{(\phi),\gamma,\delta}(f,y,t)\rho^{(\eta, \tau)}(t)dX(t,\omega),
\end{eqnarray*}
where
\begin{equation*}
\sigma_n^{(\phi),\gamma,\delta}(f,y,t):=\sum_{k=0}^{n-1} \frac{B_{n-k-1}^{(\phi)}}{B_{n-1}^{(\phi)}} \mathbf{s}_n^{(\gamma,\delta)}(f,y,t),
\end{equation*}
and $\mathbf{s}_n^{(\gamma,\delta)}(f,y)$ is the $n$th partial sum of Fourier--Jacobi series (\ref{1.1}).\\
Now
 \begin{eqnarray*}
&& \mathbf{Q}_n^{(\phi),\gamma,\delta}(f,y,\omega)-\mathbf{Q}_m^{(\phi),\gamma,\delta}(f,y,\omega)\\
&=&\int_{-1}^1\Big(\sigma_n^{(\phi),\gamma,\delta}(f,y,t)-\sigma_m^{(\phi),\gamma,\delta}(f,y,t)\Big)\rho^{(\eta, \tau)}(t)dX(t,\omega).
 \end{eqnarray*}
By the inequality in Lemma \ref{l1}, we obtain $\mathbf{Q}_n^{(\phi),\gamma,\delta}(f,y,\omega)$ is a Cauchy sequence.
With the help of Chripk{\'o} result (see \cite[Corollary~3.4]{C}) on $(C,\phi),\phi \geq 1$ summation and
the Lemma \ref{l1}, the Ces{\`a}ro means of the random series (\ref{1.11.0}) converges in probability to the stochastic integral $\int_{-1}^1 f(y,t)\rho^{(\eta,\tau)}(t)dX(t,\omega)$ in probability.
\end{proof}

The following theorem states the summability of random Fourier--Jacobi series (\ref{1.11.0}) in some other means under the same conditions (\ref{1.10}) in Theorem \ref{T5} by using the Lemma \ref{l1},
for $f$ in $C_{[-1,1]}^{(\eta,\tau)},\eta,\tau \geq 0.$
\begin{theorem}\label{T7}
The random Fourier--Jacobi series (\ref{1.11.0}) is
\\(a) \textit{Riesz} $(R,\mu,\nu),$ for $\mu,\nu \geq 1,$
\\(b)  \textit{de la Vall{\'e}e Poussin}, for every $s \in (0,1),$
\\(c)  \textit{Rogosinski}
summable in probability to the stochastic integral (\ref{1.14}),
if $\eta,\tau\geq 0,\gamma,\delta \geq -1/2$ satisfy the conditions in Theorem \ref{T5}.
\end{theorem}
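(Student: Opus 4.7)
The plan is to treat the three cases (a), (b), (c) uniformly by reducing each one to Theorem \ref{T5}. In each case the given summability method corresponds to a specific choice of the matrix $\Theta = (\theta_{k,n})$ described in Cases 3, 4, 5 of Section 2, so it suffices to verify that these particular choices satisfy one of the composite conditions (A), (B), or (C) from the list $(T_1)$--$(T_5)$. Once this is done, Theorem \ref{T5} applied with the corresponding $\theta_{k,n}$ directly delivers convergence in probability to the stochastic integral in \eqref{1.14}.

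Concretely, I would first write the $n$th summability mean of the random series \eqref{1.11.0} in the integral form
\[
\mathbf{S}_n^{\Theta,(\gamma,\delta)}(f,y,\omega)=\int_{-1}^{1}\mathbf{s}_n^{\Theta,(\gamma,\delta)}(f,y,t)\rho^{(\eta,\tau)}(t)\,dX(t,\omega),
\]
exactly as in the proof of Theorem \ref{T5}, and then bound
\[
P\Big(\big|\mathbf{S}_n^{\Theta,(\gamma,\delta)}(f,y,\omega)-\textstyle\int_{-1}^{1}f(y,t)\rho^{(\eta,\tau)}(t)dX(t,\omega)\big|>\epsilon\Big)
\]
via Lemma \ref{l1} by the quantity
\[
\frac{C\,2^{\alpha+1}}{(\alpha+1)\epsilon'^{\alpha}}\int_{-1}^{1}\big|(\mathbf{s}_n^{\Theta,(\gamma,\delta)}(f,y,t)-f(y,t))\rho^{(\eta,\tau)}(t)\big|^{\alpha}dt.
\]
The problem then reduces to showing that this $L^{\alpha}$ integral tends to $0$, and since we already have uniform convergence of $\mathbf{s}_n^{\Theta,(\gamma,\delta)}$ on $[-1,1]$ (for each fixed $y$) by the corresponding deterministic result of Chripk\'o, the integral tends to $0$ by bounded convergence for $\alpha\in[1,2]$.

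The heart of the proof is therefore the matrix-theoretic verification. For case (a), with $\theta_{k,n}=[1-(k/n)^{\nu}]^{\mu}$, $\mu,\nu\ge 1$, I would check directly that $\theta_{k,n}\to 1$ for every fixed $k$ (giving $(T_1)$), that $\theta_{n-1,n}=O(1/n)$ (giving $(T_2)$, using $1-(1-1/n)^{\nu}=O(1/n)$ and $\mu\ge 1$), and that the second differences $\Delta^2\theta_{k-1,n}$ have constant sign and are $O(1/n^2)$; this yields condition (A) (or (B)), exactly as in \cite[Corollary~3.6]{C}. For case (b), the de la Vall\'ee Poussin matrix is piecewise linear in $k/n$, so all second differences vanish except at the transition index near $k=sn$, giving a constant-sign second-difference pattern; this puts it in class (C), after verifying $(T_1)$ and $(T_5)$ directly. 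For case (c), with $\theta_{k,n}=\cos(\pi k/2n)$, $(T_1)$ is immediate, $\theta_{n-1,n}=\cos(\pi(n-1)/2n)=\sin(\pi/(2n))=O(1/n)$ gives $(T_2)$, and the concavity of cosine on $[0,\pi/2]$ together with the standard bound $|\cos x - 2\cos y + \cos z|=O((z-x)^2)$ gives $\Delta^2\theta_{k-1,n}=O(1/n^2)$ of constant sign, placing it in class (A) (and (B)).

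The main obstacle I expect is the second-difference verification in cases (a) and (c): showing the $O(1/n^2)$ bound with the correct constant sign requires a careful Taylor expansion (for cosine) and a more delicate monotonicity-of-derivative argument for $[1-(k/n)^{\nu}]^{\mu}$, especially for non-integer $\mu,\nu$. Once those estimates are in hand, however, the conclusion follows mechanically from Theorem \ref{T5}, and no further probabilistic argument beyond the application of Lemma \ref{l1} is needed.
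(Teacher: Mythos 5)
Your proposal is correct and follows exactly the route the paper intends: the paper states Theorem \ref{T7} without a written proof, remarking only that it follows from the conditions of Theorem \ref{T5} together with Lemma \ref{l1}, which is precisely your reduction of each summation method to a $\Theta$--matrix satisfying (A), (B) or (C) and then invoking Chripk\'o's deterministic convergence inside the Lemma \ref{l1} bound. Your explicit verification of the conditions $(T_1)$--$(T_5)$ for the Riesz, de la Vall\'ee Poussin and Rogosinski matrices supplies detail the paper delegates entirely to \cite{C}, but the argument is the same.
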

\section{ Summability of random Fourier--Jacobi series in $L_{[-1,1]}^{1,(\gamma,\delta)}$ space}
Let $f \in L_{[-1,1]}^{1,(\gamma,\delta)},\; \gamma,\delta \geq 0$ and $a_n$ be the Fourier--Jacobi coefficients of a function $f.$
Consider the random Fourier--Jacobi series as
\begin{equation}\label{1.11.2}
\sum_{n=0}^\infty a_nA_n(\omega) p_n^{(\gamma,\delta)}(y),
\end{equation}
where $a_n$ are Fourier--Jacobi coefficients of a function $f \in L_{[-1,1]}^{1,(\gamma,\delta)}$ and
  $A_n(\omega)$ defined as
 \begin{equation}\label{1.13.1}
\int_{-1}^1 p_n^{(\gamma,\delta)}(t)\rho^{(\gamma,\delta)}(t) dX(t,\omega)
 \end{equation}
are the Fourier--Jacobi coefficients of symmetric stable process $X(t,\omega)$ of index $\alpha=1.$
It is shown in the following  theorem that the N{\"o}rlund $(\mathbf{N},p_n)$ sum of
 the random Fourier--Jacobi series (\ref{1.11.2}) converges
 in probability to the stochastic integral
\begin{equation}\label{5.3}
\int_{-1}^1 A \rho^{(\gamma,\delta)}(t)dX(t,\omega),
\end{equation}
where $A$ is a positive fixed constant.
\begin{theorem}\label{5T.1}
If $p_n$ is the non--negative non--increasing sequences satisfy the conditions (\ref{5.13.1}), (\ref{5.12}), (\ref{5.12.1}) along with the antipole condition (\ref{5.13}) stated in the Theorem \ref{t.1} and $0 \leq \gamma < 1/2,\; \delta \geq 0,$
then the random Fourier--Jacobi series (\ref{1.11.2}) is $(\mathbf{N},p_n)$ summable in probability to the stochastic integral
(\ref{5.3}) at the point one.
\end{theorem}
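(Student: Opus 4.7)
My plan is to adapt the stochastic-integral strategy from Theorems~\ref{T5} and \ref{T6}, replacing Chripk{\'o}'s theorem by Choudhary's Theorem~\ref{t.1}. First, I would express the random N{\"o}rlund mean at $y=1$ as a single stochastic integral: substituting (\ref{1.13.1}) into $\mathbf{S}_k^{(\gamma,\delta)}(f,1,\omega)=\sum_{j=0}^{k-1}a_j A_j(\omega) p_j^{(\gamma,\delta)}(1)$ and interchanging the finite sums with the integral yields
\begin{equation*}
\mathbf{H}_n^{(\gamma,\delta)}(f,1,\omega):=\frac{1}{\mathrm{P}_n}\sum_{k=0}^{n}p_{n-k}\mathbf{S}_k^{(\gamma,\delta)}(f,1,\omega)=\int_{-1}^{1}\mathbf{h}_n^{(\gamma,\delta)}(f,1,t)\rho^{(\gamma,\delta)}(t)dX(t,\omega),
\end{equation*}
where $\mathbf{h}_n^{(\gamma,\delta)}(f,1,t):=\frac{1}{\mathrm{P}_n}\sum_{k=0}^{n}p_{n-k}\mathbf{s}_k^{(\gamma,\delta)}(f,1,t)$ and $\mathbf{s}_k^{(\gamma,\delta)}(f,1,t):=\sum_{j=0}^{k-1}a_j p_j^{(\gamma,\delta)}(1) p_j^{(\gamma,\delta)}(t)$. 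The target random variable (\ref{5.3}) is already in stochastic-integral form, so the difference has the same shape.

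Next, since $X$ is symmetric stable of index $\alpha=1$, I would apply Lemma~\ref{l1} to obtain the tail bound
\begin{equation*}
P\Big(\Big|\mathbf{H}_n^{(\gamma,\delta)}(f,1,\omega)-\int_{-1}^{1}A\rho^{(\gamma,\delta)}(t)dX(t,\omega)\Big|>\epsilon\Big)\leq \frac{2C}{\epsilon'}\int_{-1}^{1}\big|\mathbf{h}_n^{(\gamma,\delta)}(f,1,t)-A\big|\rho^{(\gamma,\delta)}(t)dt
\end{equation*}
for any $0<\epsilon'<\epsilon$. The same estimate applied to the difference $\mathbf{H}_n-\mathbf{H}_m$, exactly as in the proof of Theorem~\ref{T5}, reduces both the Cauchy-in-probability property and the identification of the limit to the single deterministic statement
\begin{equation*}
\int_{-1}^{1}\big|\mathbf{h}_n^{(\gamma,\delta)}(f,1,t)-A\big|\rho^{(\gamma,\delta)}(t)dt\longrightarrow 0\quad\text{as }n\to\infty.
\end{equation*}

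The main obstacle is this last, purely deterministic, convergence. Choudhary's Theorem~\ref{t.1} only yields the scalar pointwise conclusion $\mathbf{h}_n^{(\gamma,\delta)}(f,1)\to A$, whereas what is required here is a weighted $L^1$-in-$t$ convergence of the bivariate kernel $\mathbf{h}_n^{(\gamma,\delta)}(f,1,t)$ to the constant $A$. The strategy I would follow is to revisit Choudhary's kernel estimates and retain the extra factor $p_j^{(\gamma,\delta)}(t)$ inside the Christoffel--Darboux sum, then split the $t$-integral into a neighbourhood of the peak $t=1$, an interior region $|t|<1-\eta$, and a neighbourhood of the antipode $t=-1$. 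On these pieces the hypotheses (\ref{5.13.1}), (\ref{5.12}), (\ref{5.12.1}) on $\{p_n\}$ control the averaged N{\"o}rlund weights, the Dini-type hypothesis on $\Psi$ handles the peak, and the antipole condition (\ref{5.13}) handles the antipode. The stricter range $0\leq\gamma<1/2,\;\delta\geq 0$ (compared with $-1/2\leq\gamma<1/2,\;\delta>-1/2$ in Choudhary) is precisely what keeps $\rho^{(\gamma,\delta)}(t)$ a non-negative integrable weight against this enlarged kernel, so that the resulting bounds remain summable after integration in $t$. Combining this weighted $L^1$ convergence with the tail estimate above then yields convergence in probability of $\mathbf{H}_n^{(\gamma,\delta)}(f,1,\omega)$ to the stochastic integral (\ref{5.3}).
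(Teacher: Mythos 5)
Your proposal follows the same route as the paper's own proof: write the N{\"o}rlund mean at $y=1$ as a single stochastic integral with deterministic kernel $\mathbf{h}_n^{(\gamma,\delta)}(f,1,t)=\frac{1}{\mathrm{P}_n}\sum_{k=0}^{n}p_{n-k}\mathbf{s}_k^{(\gamma,\delta)}(f,1,t)$, apply Lemma \ref{l1} with $\alpha=1$ to reduce both the Cauchy-in-probability property and the identification of the limit (\ref{5.3}) to the single deterministic statement $\int_{-1}^{1}\big|(\mathbf{h}_n^{(\gamma,\delta)}(f,1,t)-A)\rho^{(\gamma,\delta)}(t)\big|\,dt\to 0$, and then invoke Choudhary's Theorem \ref{t.1}. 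The one place you diverge is the final step, and there you are more careful than the paper: the paper simply cites Theorem \ref{t.1} as if it delivered the weighted $L^1$-in-$t$ convergence of the bivariate kernel, whereas you correctly point out that Theorem \ref{t.1} literally gives only the scalar convergence of the N{\"o}rlund means at the point $y=1$, so that the $L^1$ statement requires reworking Choudhary's kernel estimates with the extra factor $p_j^{(\gamma,\delta)}(t)$ retained and the weight $\rho^{(\gamma,\delta)}$ integrated out --- which is exactly where the strengthened parameter range $0\le\gamma<1/2$, $\delta\ge 0$ must enter. Your treatment of that step is a plan rather than a completed argument (the peak/interior/antipole splitting is sketched, not estimated), but it isolates the genuine mathematical content of the theorem more precisely than the paper's own proof, which passes over this point without comment.
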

\begin{proof}
Let \begin{equation}\label{0}
\mathbf{S}_n^{(\gamma,\delta)}(f,y,\omega):=\sum_{k=0}^n a_k A_k(\omega)p_k^{(\gamma,\delta)}(y)
\end{equation}
be the $n$th partial sum of the random Fourier--Jacobi series (\ref{1.11.2}).
For $y=1,$ the integral form of $\mathbf{S}_n^{(\gamma,\delta)}(f,1,\omega)$ is
\begin{equation*}
\mathbf{S}_n^{(\gamma,\delta)}(f,1,\omega):=\sum_{k=0}^n a_kA_k(\omega)p_k^{(\gamma,\delta)}(1)
\end{equation*}
\begin{equation}\label{0.1}
=\int_{-1}^1 \mathbf{s}_n^{(\gamma,\delta)}(1,t) \rho^{(\gamma,\delta)}(t)dX(t,\omega),
\end{equation}
where $$\mathbf{s}_n^{(\gamma,\delta)}(f,1,t):=\sum\limits_{k=0}^n a_k p_k^{(\gamma,\delta)}(t) p_k^{(\gamma,\delta)}(1) .$$
The $(\mathbf{N},p_n)$ means of the random Fourier--Jacobi series (\ref{1.11.2}) at the point $y=1$ is
\begin{equation}\label{5.7}
\mathbf{H}_n^{(\gamma,\delta)}(f,1,\omega):=\frac{1}{\mathrm{P}_n}\sum_{k=0}^n p_{n-k}\mathbf{S}_k^{(\gamma,\delta)}(f,1,\omega).
\end{equation}
The integral form of $(\mathbf{N},p_n)$ means (\ref{5.7}) is
\begin{eqnarray*}
\mathbf{H}_n^{(\gamma,\delta)}(f,1,\omega)&:=&\frac{1}{\mathrm{P}_n}\sum_{k=0}^n p_{n-k}\mathbf{S}_n^{(\gamma,\delta)}(f,1,\omega)\\
&=& \int_{-1}^1 \mathbf{h}_n^{(\gamma,\delta)}(f,1,t) \rho^{(\gamma,\delta)}(t)dX(t,\omega),
\end{eqnarray*}
where $$\mathbf{h}_n^{(\gamma,\delta)}(f,1,t):=\frac{1}{\mathrm{P}_n}\sum\limits_{k=0}^n p_{n-k} \mathbf{s}_n^{(\gamma,\delta)}(f,1,t) .$$
By Lemma \ref{l1}, for $\alpha=1,$
\begin{eqnarray*}
&&P\Big(\Big|\mathbf{H}_n^{(\gamma,\delta)}(f,1,\omega)- \mathbf{H}_m^{(\gamma,\delta)}(f,1,\omega)\Big|>\epsilon\Big)\\
&\leq& \frac{C2^{\alpha+1}}{\epsilon'^\alpha(\alpha+1)} \int_{-1}^1 \Bigg|\Big(\mathbf{h}_n^{(\gamma,\delta)}(f,1,t)- \mathbf{h}_m^{(\gamma,\delta)}(f,1,t)\Big)\rho^{(\gamma,\delta)}(t)\Bigg|dt.
\end{eqnarray*}
If the equations (\ref{5.13.1}), (\ref{5.12}) and (\ref{5.12.1}) are satisfied by $\mathrm{P}_n$ with the antipole condition (\ref{5.13}), then
the N{\"o}rlund $(\mathbf{N},p_n)$ sum of Fourier--Jacobi series (\ref{1.1}) at the point $y=1$ converges to $A.$
So, $\{ \mathbf{h}_n^{(\gamma,\delta)}(f,1,t) \}$ forms a Cauchy sequence.
Thus
\begin{equation*}
\lim_{n \rightarrow \infty} \int_{-1}^1 \Bigg|\bigg(\mathbf{h}_n^{(\gamma,\delta)}(f,1,t)- \mathbf{h}_m^{(\gamma,\delta)}(f,1,t)\bigg)\rho^{(\gamma,\delta)}(t)\Bigg|dt=0.
\end{equation*}
This gives that $\mathbf{H}_n^{(\gamma,\delta)}(f,1,\omega)$ is a Cauchy sequence and converges in probability to a random variable.
With the help of Lemma \ref{l1},
\begin{eqnarray*}
&&P\Bigg(\Big|\mathbf{H}_n^{(\gamma,\delta)}(f,1,\omega)-\int_{-1}^1 A \rho^{(\gamma,\delta)}(\psi)dX(\psi,\omega)\Big|>\epsilon\Bigg)\\
&\leq& \frac{C2^{\alpha+1}}{\epsilon^\alpha(\alpha+1)} \int_{-1}^1 \Big|\Big(\mathbf{h}_n^{(\gamma,\delta)}(f,1,t)- A\Big)\rho^{(\gamma,\delta)}(t)\Big|dt.
\end{eqnarray*}
Hence by the Theorem \ref{t.1},
\begin{equation*}
\lim_{n \rightarrow \infty} \int_{-1}^1 \Big|\Big(\mathbf{h}_n^{(\gamma,\delta)}(f,1,t)- A\Big)\rho^{(\gamma,\delta)}(t)\Big|dt=0.
\end{equation*}
This completes the proof of theorem.
\end{proof}
The following two theorems state about the generalized N{\"o}rlund summability and lower triangular summability of random Fourier--Jacobi series (\ref{1.11.2})
which can be established by following the same steps in Theorem \ref{5T.1}.
\begin{theorem}\label{5T.3}
If $\gamma \geq 0,\; \delta-\gamma >1,\; \delta+\gamma > 0$ and $\{q_n\},\; \{r_n\}$ are the non--negative real constants sequences satisfy the conditions (\ref{1.15}), (\ref{1.16}) and the antipole condition (\ref{1.17}) stated in the Theorem \ref{5t.1},
 then the random Fourier--Jacobi series (\ref{1.11.2}) is $|\mathbf{N},q_n,r_n|$
 summable in probability to the stochastic integral
(\ref{5.3})
at the point $y=1.$
\end{theorem}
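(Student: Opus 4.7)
The proof plan mirrors that of Theorem~\ref{5T.1}, substituting the generalized N{\"o}rlund kernel $\{q_k r_{n-k}/(q\ast r)_n\}$ for the N{\"o}rlund kernel $\{p_{n-k}/\mathrm{P}_n\}$ and invoking Theorem~\ref{5t.1} in place of Theorem~\ref{t.1}. In the paper's convention (see the definition preceding Theorem~\ref{5t.1}), ``$|\mathbf{N},q_n,r_n|$ summable in probability to $A$'' means that the generalized N{\"o}rlund means of the random series converge in probability to $A$; so the target is to establish exactly this convergence for the means evaluated at $y=1$.

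Concretely, I would first define
\[
\mathbf{U}_n^{(\gamma,\delta)}(f,1,\omega) := \frac{1}{(q\ast r)_n}\sum_{k=0}^n q_k r_{n-k}\,\mathbf{S}_k^{(\gamma,\delta)}(f,1,\omega),
\]
and substitute the integral representation~(\ref{0.1}) for $\mathbf{S}_k^{(\gamma,\delta)}(f,1,\omega)$. Interchanging the finite sum with the stochastic integral gives
\[
\mathbf{U}_n^{(\gamma,\delta)}(f,1,\omega) = \int_{-1}^1 \mathbf{u}_n^{(\gamma,\delta)}(f,1,t)\,\rho^{(\gamma,\delta)}(t)\,dX(t,\omega),
\]
where $\mathbf{u}_n^{(\gamma,\delta)}(f,1,t) := (q\ast r)_n^{-1}\sum_{k=0}^n q_k r_{n-k}\mathbf{s}_k^{(\gamma,\delta)}(f,1,t)$ is the deterministic generalized N{\"o}rlund mean of the Fourier--Jacobi kernel. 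Applying Lemma~\ref{l1} with $\alpha=1$ to the two random variables $\mathbf{U}_n^{(\gamma,\delta)}-\mathbf{U}_m^{(\gamma,\delta)}$ and $\mathbf{U}_n^{(\gamma,\delta)}-\int_{-1}^1 A\,\rho^{(\gamma,\delta)}(t)\,dX(t,\omega)$ reduces convergence in probability to the two deterministic $L^1$-statements
\[
\lim_{n,m\to\infty}\int_{-1}^1 \big|\mathbf{u}_n^{(\gamma,\delta)}(f,1,t)-\mathbf{u}_m^{(\gamma,\delta)}(f,1,t)\big|\rho^{(\gamma,\delta)}(t)\,dt=0,\qquad \lim_{n\to\infty}\int_{-1}^1 \big|\mathbf{u}_n^{(\gamma,\delta)}(f,1,t)-A\big|\rho^{(\gamma,\delta)}(t)\,dt=0.
\]

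The principal obstacle is establishing these deterministic $L^1$-statements: Theorem~\ref{5t.1} only asserts the pointwise convergence $\mathbf{u}_n^{(\gamma,\delta)}(f,1)\to A$ at $y=1$, whereas Lemma~\ref{l1} requires the $t$-integrated statements above. The hypotheses $\gamma\geq 0$ and $\delta+\gamma>0$ (stricter than the $\gamma>-1/2$, $\delta+\gamma\geq -1$ in Theorem~\ref{5t.1}) are what compensate for this: $\delta+\gamma>0$ guarantees integrability of $\rho^{(\gamma,\delta)}$ at both endpoints in the required sense, and together with $\gamma\geq 0$ they permit either (i)~exploiting the symmetric factorisation $p_k^{(\gamma,\delta)}(t)p_k^{(\gamma,\delta)}(1)$ in the kernel to transfer Raghuvanshi's pointwise estimate to the $t$-variable uniformly on $[-1,1]$, or (ii)~invoking dominated convergence against a $\rho^{(\gamma,\delta)}$-integrable majorant obtained from the Christoffel-type bounds underlying Theorem~\ref{5t.1}. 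Once the two $L^1$-limits above are secured, Lemma~\ref{l1} closes the argument in the same manner as in Theorem~\ref{5T.1}, first producing a Cauchy sequence in probability (hence a limiting random variable) and then identifying the limit with the stochastic integral~(\ref{5.3}).
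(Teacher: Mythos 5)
Your proposal follows essentially the same route as the paper: the paper gives no separate proof of Theorem~\ref{5T.3} at all, stating only that it ``can be established by following the same steps in Theorem~\ref{5T.1}'', and those steps are exactly what you reproduce --- integral representation of the generalized N{\"o}rlund means, Lemma~\ref{l1} with $\alpha=1$, and Theorem~\ref{5t.1} in place of Theorem~\ref{t.1}. The one substantive difference is that you explicitly flag the passage from the pointwise statement of Theorem~\ref{5t.1} (convergence of the numerical sequence $\mathbf{u}_n^{(\gamma,\delta)}(f,1)\to A$) to the two $L^1(\rho^{(\gamma,\delta)}\,dt)$ limits that Lemma~\ref{l1} actually requires; the paper's own argument for Theorem~\ref{5T.1} simply asserts this transfer (``So, $\{\mathbf{h}_n^{(\gamma,\delta)}(f,1,t)\}$ forms a Cauchy sequence'') without justification, and the same assertion is implicitly being made here. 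So the obstacle you name is real and is not resolved in the paper either; your two suggested repairs (uniform transfer via the factorized kernel $p_k^{(\gamma,\delta)}(t)p_k^{(\gamma,\delta)}(1)$, or dominated convergence with a $\rho^{(\gamma,\delta)}$-integrable majorant) remain sketches, so to have a complete proof you would need to carry one of them out --- but on this point your write-up is more careful than the source, not less.
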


\begin{theorem}\label{5T.2}
If $0 \leq \gamma \leq 1/2,\; \delta \geq 0$ and the infinite triangular regular matrix $M=(\mathbf{b}_{n,k})$ be such that, its entries $(\mathbf{b}_{n,k})$ are positive, monotonic increasing in $k, \;0 \leq k \leq n,$ and satisfy the conditions (\ref{1.18}), (\ref{1.19}), (\ref{1.20}), (\ref{1.21}) with the antipole condition (\ref{1.22}) stated in the Theorem \ref{5t.2},
then the random Fourier--Jacobi series (\ref{1.11.2}) is $(\mathbf{T})$ summable to the stochastic integral
(\ref{5.3}) at the point $y=1.$
\end{theorem}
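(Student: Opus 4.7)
The plan is to mirror the argument given for Theorem \ref{5T.1}, replacing the N{\"o}rlund averages by the lower triangular matrix averages and replacing the appeal to Theorem \ref{t.1} by the corresponding appeal to Theorem \ref{5t.2}. First, using the integral representation (\ref{0.1}) of the partial sum $\mathbf{S}_k^{(\gamma,\delta)}(f,1,\omega)$ and interchanging the finite sum with the stochastic integral, I will write
\begin{equation*}
\mathbf{T}_n^{(\gamma,\delta)}(f,1,\omega):=\sum_{k=0}^n \mathbf{b}_{n,k}\,\mathbf{S}_k^{(\gamma,\delta)}(f,1,\omega)
=\int_{-1}^1 \mathbf{t}_n^{(\gamma,\delta)}(f,1,t)\,\rho^{(\gamma,\delta)}(t)\,dX(t,\omega),
\end{equation*}
where $\mathbf{t}_n^{(\gamma,\delta)}(f,1,t):=\sum_{k=0}^n \mathbf{b}_{n,k}\,\mathbf{s}_k^{(\gamma,\delta)}(f,1,t)$ is precisely the deterministic $(\mathbf{T})$-sum of the Fourier--Jacobi series at $y=1$ evaluated with the kernel variable $t$.

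Next, I will apply Lemma \ref{l1} with $\alpha=1$ to the random variable $\mathbf{T}_n^{(\gamma,\delta)}(f,1,\omega)-\mathbf{T}_m^{(\gamma,\delta)}(f,1,\omega)$, obtaining the estimate
\begin{equation*}
P\Big(\big|\mathbf{T}_n^{(\gamma,\delta)}(f,1,\omega)-\mathbf{T}_m^{(\gamma,\delta)}(f,1,\omega)\big|>\epsilon\Big)
\leq \frac{4C}{\epsilon'}\int_{-1}^1 \big|\mathbf{t}_n^{(\gamma,\delta)}(f,1,t)-\mathbf{t}_m^{(\gamma,\delta)}(f,1,t)\big|\rho^{(\gamma,\delta)}(t)\,dt.
\end{equation*}
Since the hypotheses imposed on $\{\mathbf{b}_{n,k}\}$, on $f$, and on the parameters $\gamma,\delta$ are exactly those of Theorem \ref{5t.2} (the tighter range $0\leq \gamma\leq 1/2$, $\delta\geq 0$ merely ensures integrability of the weight $\rho^{(\gamma,\delta)}$ needed for the stochastic integral (\ref{5.3}) to make sense), Theorem \ref{5t.2} forces $\mathbf{t}_n^{(\gamma,\delta)}(f,1,t)\to A$, so $\{\mathbf{t}_n^{(\gamma,\delta)}(f,1,t)\}$ is a Cauchy family in the weighted $L^1$ sense and the right-hand side tends to zero; hence $\mathbf{T}_n^{(\gamma,\delta)}(f,1,\omega)$ is Cauchy in probability and converges to some random variable.

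To identify the limit, I will invoke Lemma \ref{l1} once more, this time applied to $\mathbf{T}_n^{(\gamma,\delta)}(f,1,\omega)-\int_{-1}^1 A\,\rho^{(\gamma,\delta)}(t)\,dX(t,\omega)$, which produces
\begin{equation*}
P\Bigg(\Big|\mathbf{T}_n^{(\gamma,\delta)}(f,1,\omega)-\int_{-1}^1 A\,\rho^{(\gamma,\delta)}(t)\,dX(t,\omega)\Big|>\epsilon\Bigg)
\leq \frac{4C}{\epsilon'}\int_{-1}^1 \big|\mathbf{t}_n^{(\gamma,\delta)}(f,1,t)-A\big|\rho^{(\gamma,\delta)}(t)\,dt,
\end{equation*}
and a direct appeal to Theorem \ref{5t.2} makes the right-hand side vanish as $n\to\infty$. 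This yields convergence in probability of the $(\mathbf{T})$-sum to the stochastic integral (\ref{5.3}), finishing the proof.

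The step I expect to require the most care is the interchange of the finite sum $\sum_{k=0}^n \mathbf{b}_{n,k}$ with the stochastic integral in the initial representation and, more importantly, the reduction of the pointwise/Ces{\`a}ro-type convergence supplied by Theorem \ref{5t.2} to convergence of the weighted $L^1$ integral $\int_{-1}^1|\mathbf{t}_n^{(\gamma,\delta)}(f,1,t)-A|\rho^{(\gamma,\delta)}(t)\,dt$; this is where the restriction $\gamma\geq 0$, $\delta\geq 0$ becomes essential, since it guarantees boundedness of the weight and lets the deterministic convergence at $y=1$ (combined with the antipole control (\ref{1.22})) propagate to the weighted integral via a uniform/dominated-convergence argument.
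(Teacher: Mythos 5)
Your proposal matches the paper's intention exactly: the paper gives no separate proof of Theorem \ref{5T.2}, stating only that it ``can be established by following the same steps in Theorem \ref{5T.1},'' and your argument is precisely that transcription --- integral representation of the $(\mathbf{T})$-means, Lemma \ref{l1} with $\alpha=1$ for the Cauchy property, and a second application of Lemma \ref{l1} together with Theorem \ref{5t.2} to identify the limit as the stochastic integral (\ref{5.3}). The delicate point you flag (passing from the pointwise convergence at $y=1$ supplied by Theorem \ref{5t.2} to the weighted $L^1$ convergence of the kernel means) is glossed over in the paper's proof of Theorem \ref{5T.1} in exactly the same way, so your write-up is faithful to the paper's level of rigor.
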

\section*{Acknowledgments}
This research work was supported by University Grant Commission (National Fellowship with letter no-F./2015-16/NFO-2015-17-OBC-ORI-33062).

\end{document}